\theoremstyle{plain}
\theoremstyle{plain}
\newtheorem{theorem}{Theorem}[section]
\newtheorem{lemma}[theorem]{Lemma}
\newtheorem{proposition}[theorem]{Proposition}
\newtheorem{corollary}[theorem]{Corollary}
\newtheorem{Counter-example}[theorem]{Counter-example}
\newtheorem{remark}[theorem]{Remark}
\theoremstyle{definition}
\newtheorem{definition}[theorem]{Definition}
\theoremstyle{remark}
\long\def\symbolfootnote[#1]#2{\begingroup\def\thefootnote{\fnsymbol{footnote}}
\footnote[#1]{#2}\endgroup}
\begin{document}

\def\Q{\mathbb Q}
\def\R{\mathbb R}
\def\N{\mathbb N}
\def\Z{\mathbb Z}
\def\C{\mathbb C}
\def\S{\mathbb S}
\def\L{\mathbb L}
\def\H{\mathbb H}
\def\K{\mathbb K}
\def\X{\mathbb X}
\def\Y{\mathbb Y}
\def\Z{\mathbb Z}
\def\E{\mathbb E}
\def\J{\mathbb J}
\def\I{\mathbb I}
\def\T{\mathbb T}
\def\H{\mathbb H}

\title{Codimension two spacelike submanifolds in \\ Lorentzian manifolds  and conformal structures}

\author{Rodrigo Mor\'on$^*$}
\address{
  Departamento de Matem\'{a}tica
  Aplicada, 
  Universidad de M\'{a}laga,  29071-M\'{a}laga (Spain)}
\email{ruyman@uma.es}

\email{fpalomo@uma.es}

\author{Francisco J. Palomo}


\keywords{Lorentzian geometry,  Spacelike submanifolds, M\"{o}bius structure, Pre-ambient space.\newline}

\date{}

\symbolfootnote[0]{

Both authors are  partially supported  by Spanish MICINN project PID2020-118452GB-100.
 
$^*$ Corresponding author.
 
2020 {\it Mathematics Subject Classification}.\, Primary 53C18,  53C50, 53C40, 53C42. Secondary 53C05, 53B15.}

\thispagestyle{empty}

\begin{abstract}
Starting from a Riemannian conformal structure on a manifold $M$, we provide a method to construct a family of Lorentzian manifolds. The construction relies on the choice  of a metric in the conformal class and a smooth  $1$-parameter family of self-adjoint tensor fields. Then, every metric in the conformal class corresponds to the induced metric on $M$ seen as a codimension two spacelike submanifold into these Lorentzian manifolds.
Under suitable choices of the $1$-parameter family of tensor fields,  there exists a lightlike normal  vector field along such spacelike submanifolds whose Weingarten endomorphism provide a M\"{o}bius structure on the Riemannian conformal structure.  Conversely,  every M\"{o}bius structure on a Riemannian conformal structure arises in this way.  
Flat M\"{o}bius structures are characterized in terms of the extrinsic geometry of the corresponding spacelike surfaces.
\end{abstract}

\maketitle

\markboth{}{}

\hyphenation{Lo-rent-zi-an}

\section{Introduction}

\noindent A Riemannian conformal structure on a manifold $M$ is an equivalence class of Riemannian metrics on $M$ where two metrics are equivalent 
if they differ by a factor that is a smooth positive function on the manifold $M$.  Conformal structures (in Lorentzian signature) was introduced by Hermann Weyl in order to formulate a unified fields theory. Weyl wrote `` 
To derive the values of the quantities $g_{ik}$ from directly
observed phenomena, we use light-signals ....
By observing the arrival of light at the points neighbouring to $O$
we can thus determine the ratios of the values of the $g_{ik}$’s ..... It is impossible,
however, to derive any further results from the phenomenon of
the propagation of light..." \cite[Chap. 4, Sec. 27]{Weyl}.

From a mathematical perspective,  the problem of the equivalence for conformal structures on $(n\geq 3)$-dimensional manifolds  was solved by E. Cartan by means of the now called canonical normal Cartan connection \cite{Car23}. For dimension $n\geq 3$,  Riemannian conformal structures $(M,c)$ correspond bijectively (up to isomorphism)  with  normal Cartan geometries of type $(G,P)$ where $G=O(1,n+1)/\{\pm \mathrm{Id}\}$ is the  M\"{o}bius  group and $P$ is the Poincar\'{e} conformal group defined to be the isotropy group of the line through an isotropic (lightlike) vector (see details in  \cite[Theor. 1.6.7]{CS09}). That is, conformal structures on  an $(n\geq 3)$-dimensional manifold $M$ gives rise to a principal $P$-bundle $\mathcal{P}\to M$ and a unique Cartan connection $\omega \in \Omega^{1}(\mathcal{P}, \mathfrak{g})$ where $\mathfrak{g}$ is the Lie algebra of the  M\"{o}bius  group $G$ such that $\omega$ satisfies certain normalization conditions  and conversely.

 These results have been extended to dimensions one and two by means of the notion of M\"obius structure \cite{Burs},  Section \ref{prel}. A M\"{o}bius structure on a manifold $M$ is essentially equivalent to defining a conformal class of metrics $c$ on $M$ and a “Schouten type-tensor” for $c$, that is, 
 a map $$
	D:c \to \mathcal{T}_{(0,2)}M
	$$ 
	such that for every $g\in c$ the tensor $D(g)$ is symmetric with $\mathrm{trace}_{g}D(g)=\frac{\mathrm{scal}^{g}}{2(n-1)}$ and $D$
follows the same conformal transformation law that the Schouten tensor, Definition \ref{280221C}. Thus, the notion of M\"{o}bius structure provides a uniform description of Cartan geometries of type $(G,P)$ for all dimensions. More explicitly, if we start with a conformal structure $(M,c)$ on an $(n\geq 2)$-dimensional manifold $M$, the set of Cartan geometries of type $(G,P)$ is in one-to-one correspondence with the set of “Schouten type-tensor” for $c$. Hence, this notion is specially relevant for conformal structures on surfaces. 

The planning of this paper is as follows. Starting from a Riemannian conformal structure $(M,c)$, by setting a metric $g\in c$ and an admissible  $1$-parameter family $\alpha \colon \R \to \mathcal{T}_{(1,1)}M$, see Definition \ref{060923A}, we construct a $(n+2)$-dimensional Lorentzian manifold $(\widetilde{M}, \widetilde{g})$, see Proposition \ref{280321D}, such that
\begin{enumerate}
\item there is a distinguished lightlike hypersurface $\mathcal{Q}\subset \widetilde{M}$ (see Definition \ref{210721A}) and
\item every metric in the conformal class $e^{2u}g\in c$
is the induced metric of an immersion from $M$ to $\widetilde{M}$ through $\mathcal{Q}$. Such immersions are defined in (\ref{immer}) and are denoted by $\Psi^u$.
\end{enumerate}

\noindent This construction 
is inspired by the Fefferman and Graham ambient metric for conformal structures in the 1980s, \cite{FG85} (see also \cite{FG}).
 Roughly speaking, starting with a Riemannian conformal structure $(M,c)$,  the space of scales $\mathcal{Q}$ consists of the rays of metrics $y:=t^{2}g_{x}$ on $T_{x}M$ where $x\in M$, $t\in \R^{+}$ and $g\in c$. The ambient metric $\widetilde{g}$ is defined so that $(\widetilde{M}, \widetilde{g})$ is a Lorentzian manifold  that admits $\mathcal{Q}$ as an embedded lightlike hypersurface. The original Fefferman-Graham metric requires certain normalisation condition  (see Remark \ref{050222}). 
In this paper,  we will adopt the weaker  notion of pre-ambient space given  in \cite{Cap},   Definition \ref{0706211}. The pre-ambient metric $\widetilde{g}$ that we define in (\ref{220221A}) is not a warped product metric in general, Remark \ref{280221A}. 

Now, every spacelike immersion $\Psi^{u}$ has codimension two in $(\widetilde{M}, \widetilde{g})$ and its normal bundle is spanned by the lightlike vector fields
vector fields $\xi^{u}$ and $\eta^{u}$ given in (\ref{normal vectors}). The main aim of this paper is to show  Theorem \ref{040222} which states that:
\begin{quote}
Assume  the admissible  $1$-parameter family $\alpha$ satisfies $\mathrm{trace}(\alpha(0))=\frac{\mathrm{scal}^{g}}{n-1}$ where $\mathrm{scal}^{g}$ is the scalar curvature of the fixed metric $g$. Then, the assignment
		$$
		D:c\to \mathcal{T}_{(0,2)}M, \quad e^{2u}g\mapsto e^{2u}g\left(A_{\eta^u}(-),- \right),
		$$
 defines a   M\"{o}bius structure for the Riemannian conformal structure $(M, c)$, where $A_{\eta^{u}}$ denotes the Weingarten endomorphism of  $\eta^{u}$. Moreover, every M\"{o}bius structure for a Riemannian conformal structure $(M, c)$ arises in this way.
\end{quote}

The content of this paper is distributed as follows. In Section \ref{prel}, taking into account ideas from \cite{Burs} and \cite{calderbank}, we recall the notion of M\"{o}bius structure on Riemannian conformal structures $(M,c)$ as was introduced in \cite{burden2015numerical2}, extended to arbitrary dimension in an obvious way.   We also include several basic facts on spacelike submanifolds in Lorentzian geometry. The spacelike submanifolds have been studied for a long time, both from the physical and mathematical points of view (see for instance \cite{Rom} and references therein). Then, we  show  some properties from the Lorentzian geometry perspective of the notion of pre-ambient space. 
Section \ref{ambient metrics} provides an explicit method to construct examples of pre-ambient spaces and includes several curvature properties of these pre-ambient spaces. 
In particular, we give conditions which permit to assure that the Ricci tensor of these pre-ambient spaces vanishes along  $\mathcal{Q}$, Corollary \ref{ricci}.

The main results are in Section \ref{main results} where it is essentially  shown that M\"{o}bius  structures agree with certain Weingarten endomorphisms of codimension two spacelike submanifolds in these pre-ambient spaces, Theorem \ref{040222}. 
This result is remarkable for conformal structures in surfaces. As was mentioned, there is no preferred  M\"{o}bius structure on a $2$-dimensional Riemannian conformal structure. Theorem \ref{040222} provides an explicit method to construct such structure. 
We hope that our viewpoint sheds some light on the interplay between the theory of spacelike submanifolds and  M\"{o}bius structures  on  Riemannian conformal structures. Section \ref{main results} also includes several properties on the family of spacelike immersions we need to construct the M\"{o}bius  structure. In fact, Corollary \ref{240321A} shows that the normal curvature tensor of such immersions always vanishes. Also, as a consequence of Remark \ref{100321A}, the mean curvature vector field of the isometric immersion $\Psi^u$ with induced  metric $e^{2u}g$ satisfies
$$\|\mathbf{H}^u\|^2=\frac{\mathrm{scal}^{e^{2u}g}}{n(n-1)},$$
see details in Remark \ref{k=H}. Particular cases of this formula have been previously obtained  in \cite[Cor. 4.5]{PPR} and \cite[Cor. 3.7]{PR13}.
Note that the causal character of $\mathbf{H}^u$ in the Lorentzian manifold $\widetilde{M}$ is determined by the sign of the scalar curvature of the metric $e^{2u}g.$
Remark \ref{k=H} also includes that $\nabla^{\perp}\mathbf{H}^{u}=0$ if and only if  $\mathrm{scal}^{e^{2u}g}$ is constant (compare with \cite[Cor. 3.10]{PR13}). In particular, when $M$ is compact, the positive answer  to the Yamabe problem implies that there exists an immersion $\Psi^{u}$  with parallel mean curvature vector field.

Section \ref{app} focusses in the two dimensional case, we write down the Codazzi equation in terms of the Cotton-York tensor, Lemma \ref{160721C}. Then, Proposition \ref{290321D} shows that tangent spaces of $M$ along these immersions are invariant under the curvature tensor of $(\widetilde{M}, \widetilde{g})$ if and only if the Cotton-York tensor of $c$ vanishes. 
In the terminology of \cite{calderbank}, \cite{burden2015numerical2}, this means that
 the  M\"{o}bius structure $D$ on $(M, c)$ is flat.

\section{Preliminaries}\label{prel}

\noindent All the manifolds are assumed to be smooth, Hausdorff,  satisfying the second axiom of countability and without boundary.  Let $M$ be a manifold with $\mathrm{dim}M =n \geq 2$. A Riemannian
conformal structure on $M$ is an equivalence class $c=[g]$ of Riemannian metrics  where two metrics $g$ and $g'$ are said to be equivalent when $g'=e^{2u}g$ for a smooth function $u$ on $M$.

A M\"{o}bius  structure on a manifold $M$ is essentially equivalent to defining a confomal class of metrics $c$ on $M$ and a “Schouten type-tensor” for $c$. 
This problem was addressed in \cite{calderbank} and \cite[Sec. 5]{Burs}.  For  our purposes, we adopt the following definition.

\begin{definition}\label{280221C}(\cite{calderbank}, \cite{burden2015numerical2})
A M\"{o}bius structure on an $(n\geq 2)$-dimensional manifold $M$ is a triple $(M , c, D)$  where $c$ is a Riemannian conformal structure on $M$ and 
\begin{enumerate}
\item $D$ is a map
	$
	D:c \to \mathcal{T}_{(0,2)}M
	$
	such that for every $g\in c$, the tensor $D(g)$ is symmetric with $$\mathrm{trace}_{g}D(g)=\frac{\mathrm{scal}^{g}}{2(n-1)},$$   where $\mathrm{scal}^{g}$ is the scalar curvature of the metric $g\in c$  and $\mathrm{trace}_{g}D(g)$ denotes the $g$-metric trace of the corresponding tensor $D(g).$

\item $D$ satisfies the following conformal transformation law
	\begin{equation}\label{030221A}
	D(e^{2u}g)=D(g)- \frac{\| \nabla^g u \|^{2}_{g}}{2}g- \mathrm{Hess}^{g}(u)+ du \otimes du,
	\end{equation}
	where $ \nabla^g u$ and $\mathrm{Hess}^{g}(u)$ are the gradient and the Hessian of the function $u\in \mathcal{C}^{\infty}(M)$ for the metric $g$, respectively.

\end{enumerate}
\end{definition}
\noindent We mean the map
$D$ as a  M\"{o}bius structure for the conformal structure $c$.  The conformal transformation law  implies that a M\"{o}bius structure $D$ for a conformal class $c$ is completely determined by the value at a single $g\in c$. In fact, the relationship between the scalar curvatures of two conformally related metrics and the conformal transformation law  imply that $\mathrm{trace}_{e^{2u}g}\,D(e^{2u}g)=\frac{\mathrm{scal}^{e^{2u}g}}{2(n-1)}$.

\smallskip

For $(n\geq 3)$-dimensional Riemannian conformal structures $(M,c)$,  there is a preferred  M\"{o}bius  structure.  In fact, let us recall that  Schouten tensor is defined by
$$
P^{g}(X,Y)= \frac{1}{n-2}\Big(\mathrm{Ric}^{g}(X,Y)-\frac{\mathrm{scal}^{g}}{2(n-1)}g \Big),
$$
where $\mathrm{Ric}^{g}$ denotes the Ricci tensor of the Riemannian metric $g\in c$.
The well-known conformal transformation law for the Schouten tensor implies that $D(g)=P^{g}$ provides a M\"{o}bius structure for the conformal class $c$.
 Therefore,   for conformal structures on $(n\geq 3)$-dimensional manifolds,  the  Schouten tensor gives a canonical M\"{o}bius  structure.  
For  the two dimensional case, there is something new.  Namely,  on a $2$-dimensional conformal Riemannian manifold $(M,c)$, a M\"{o}bius structure  is equivalent  to specifying a  “Schouten type-tensor”,  \cite{Burs}, \cite{calderbank}.

\begin{remark}\label{190721A}
{ \rm
The Uniformization Theorem  states that a $2$-dimensional Riemannian manifold $(M,g)$ admits a metric $g'$ conformal to $g$ with constant Gauss curvature $k$. This fact leads to a choice of the  M\"{o}bius structure determinated by $D(g')=(k/2) \, g'$ and the conformal transformation law. On the other hand,  recall that for a connected oriented $2$-dimensional manifold $M$, there is a well-known one-to-one correspondence between conformal classes and complex structures.  A Riemann surface is a such $2$-dimensional manifold endowed with a particular choice of conformal or complex structure.   Thus,  a  M\"{o}bius  structure on a connected oriented $2$-dimensional manifold $M$ is equivalent to specifying a complex structure and a “Schouten type-tensor” on $M$.
}
\end{remark}

\begin{remark}
{ \rm For $n\geq 3$ and
taking into account $2\,\mathrm{div}\, \mathrm{Ric}^{g}=d\, \mathrm{scal}^g$ (see for instance \cite[Cor. 3.54]{One83}),  one gets that $\mathrm{div}\, \mathrm{P}^{g}=\frac{1}{2(n-1)}d\, \mathrm{scal}^g.$ This property is not satisfied for M\"{o}bius structures,   in general.

}
\end{remark}

\smallskip

In this section we also fix some terminology and notations for spacelike immersions in Lorentzian manifolds. Let $(\widetilde{M}, \widetilde{g})$ be an $(m\geq 2)$-dimensional  Lorentzian manifold. That is, $(\widetilde{M}, \widetilde{g})$  is a semi-Riemannian manifold endowed with a metric tensor $\widetilde{g}$ of signature $(1, m-1)$.  A smooth immersion  $\Psi:M\rightarrow (\widetilde{M}, \widetilde{g})$ of a (connected) $n$-dimensional  manifold $M$  is said to be spacelike when the induced metric $g:= \Psi^{*}(\widetilde{g})$ is Riemannian.

Let $\overline{\mathfrak{X}}(M)$ be the $C^{\infty}(M)-$module of vector fields along the spacelike immersion $\Psi$.  Every vector field $X \in \mathfrak{X}(\widetilde{M})$ provides, in a natural way,  the vector field $X\mid_{\Psi}:= X \circ \Psi \in \overline{\mathfrak{X}}(M)$.
As usual, for $V\in \overline{\mathfrak{X}}(M)$, we have the decomposition $V=V^{\top}+V^{\bot}$, where $V^{\top}_{x}\in T_{x}\Psi\cdot T_{x}M$ and $V^{\bot}_{x}\in (T_{x}\Psi\cdot T_{x}M)^{\bot}$ for all $x\in M$. We have agreed to denote by $T \Psi$  the differential map of $\Psi$.
We call $V^{\top}$ the tangent part of $V$ and $V^{\bot}$ the normal part of $V$. The $C^{\infty}(M)-$submodule of $\overline{\mathfrak{X}}(M)$ of all normal vector fields along $\Psi$ is denoted by $\mathfrak{X}^{\perp}(M)$, that is, $\mathfrak{X}^{\perp}(M)=\{V \in \overline{\mathfrak{X}}(M): V^{\top}=0\}$. The set of vector fields $\mathfrak{X}(M)$ may be seen as a $C^{\infty}(M)-$submodule of $\overline{\mathfrak{X}}(M)$ by meaning of 
$$
\mathfrak{X}(M) \to \overline{\mathfrak{X}}(M),\quad  V \mapsto T\Psi \cdot V,
$$
where $(T \Psi \cdot V)(x):=T_{x}\Psi \cdot V_{x}$ for all $x\in M$. 
In order to avoid ambiguities,  we explicitly write the differential map $T\Psi$ when necessary.

We write $\nabla^g$ and $\widetilde{\nabla}$ for the Levi-Civita connections of $(M,g)$ and $(\widetilde{M}, \widetilde{g})$, respectively. 
As usual, we also denote by $\widetilde{\nabla}$ the induced connection and  by $\nabla^{\perp}$ the normal connection on $M$. 
The decomposition of the induced connection $\widetilde{\nabla}$, into tangent and normal parts, leads to the Gauss and Weingarten formulas of $\Psi$ as follows
\begin{equation}\label{shape}
\widetilde{\nabla}_V W=T \Psi \cdot \nabla_V^{g}W + \mathrm{II}(V,W) \quad \quad \mathrm{and} \quad \quad \widetilde{\nabla}_V\xi=-T \Psi\cdot A_{\xi}V+\nabla^{\perp}_V\,\xi,
\end{equation}
for every tangent vector fields $V,W\in\mathfrak{X}(M)$ and $\xi\in\mathfrak{X}^{\perp}(M)$. Here $\mathrm{II}$ denotes the second fundamental form and $A_{\xi}$ the Weingarten endomorphism (or shape operator) associated to $\xi$. 
For vector fields $U,V,W\in \mathfrak{X}(M)$, we let
\begin{equation}\label{160721A}
(\nabla_{U}\mathrm{II})(V, W)=\nabla^{\perp}_{U}(\mathrm{II}(V, W))- \mathrm{II}(\nabla_{U}V, W)- \mathrm{II}(V, \nabla_{U}W).
\end{equation}
Then the Codazzi equation reads as follows (see for instance \cite[Prop. 4.33]{One83},  taking into account that our convention on the sign of the Riemannian curvature tensor is the opposite to \cite{One83})
\begin{equation}\label{160721B}
(\nabla_U \mathrm{II})(V,W)-(\nabla_V \mathrm{II} )(U,W)= \left(\widetilde{R}(T\Psi \cdot U,T\Psi \cdot V)T\Psi \cdot W\right)^{\perp},
\end{equation}
where $\widetilde{R}$ is the curvature tensor of $\widetilde{\nabla}$.
Every Weingarten endomorphism $A_{\xi}$ is self-adjoint and the second fundamental form is symmetric. They are also related by the following formula
\begin{equation}\label{230321C}
g\left(A_{\xi}V,W\right)  = \widetilde{g} \left(\mathrm{II}(V,W), \xi \right).
\end{equation}
The normal curvature tensor $R^{\perp}$ is given by
$$
R^{\perp}(V,W)\xi=\nabla^{\perp}_{V}\nabla^{\perp}_{W}\xi-\nabla^{\perp}_{W}\nabla^{\perp}_{V}\xi- \nabla^{\perp}_{[V,W]}\xi
$$
and the mean
curvature vector field  by
$\mathbf{H}=\frac{1}{n}\,\mathrm{trace}_{g}\mathrm{II}.$

A particular case occurs when, working with a codimension two immersion $\Psi$, we are able to find a global  lightlike normal frame $\{\xi,\eta\}$ along $\Psi$. That is, $\xi$ and $\eta$ are two globally defined normal vector fields  along $\Psi$ which are lightlike (i.e., $\widetilde{g}(\xi, \xi)=\widetilde{g}(\eta, \eta)=0$) with the normalization condition $\widetilde{g}(\xi,\eta)=-1$. Let $A_{\xi}$ and $A_\eta$ be the associated Weingarten endomorphisms. Then, for every $V,W\in\mathfrak{X}(M)$, the second fundamental form can be written as
\begin{equation}\label{270221B}
\mathrm{II}(V,W)=-g(A_{\eta}V,W)\xi-g(A_{\xi}V,W)\eta.
\end{equation}
Taking traces in this expression, we obtain for the mean curvature vector field
\begin{equation}\label{H}
\mathbf{H}=-\dfrac{1}{n}\left(\mathrm{trace}(A_{\eta})\xi+\mathrm{trace}(A_{\xi})\eta\right).
\end{equation}

\bigskip

Let $(M,c)$ be a Riemannian conformal structure on an $(n\geq 2)$-dimensional manifold $M$. 
Let us consider the $\R^{+}$-principal fiber bundle $\pi: \mathcal{Q}\to M$ defined as the ray fiber subbundle in the fiber bundle of Riemannian metrics given by metrics in the conformal class $c$. Thus, the fiber over $x\in M$ is formed by the values of $g_{x}$ for all metrics $g\in c$. 
 Every section of $\pi$ provides a Riemannian metric in the conformal class $c$ and the principal $\R^{+}$-action on $\mathcal{Q}$ is given by $\varphi(\tau, g_{x})=\tau^{2}g_{x}$, $x\in M$. 
 Let us denote by $Z_{\mathcal{Q}}$ the fundamental vector field for the action $\varphi$, that is,
 $$
 Z_{\mathcal{Q}}(g_{x})=\left.\frac{d}{d t}\right |_{t=0}\varphi (e^{t}, g_{x})=\left.\frac{d}{d t}\right |_{t=0}(e^{2t}g_{x}).
 $$
The principal bundle $\pi: \mathcal{Q}\to M$ is called the scale bundle of $(M,c)$.

\begin{definition}\label{0706211}(\cite{FG})
A pre-ambient space  for a Riemannian conformal structure $(M, c)$ is an $(n+2)$-dimensional Lorentzian manifold $(\widetilde{M}, \widetilde{g})$ such that
\begin{enumerate}

\item There is a free $\R^{+}$-action $\widetilde{\varphi}$ on $\widetilde{M}$ and an embedding  $\iota: \mathcal{Q}\to \widetilde{M}$ such that the following diagram commutes
\begin{equation}\label{ambientdiag}
\begin{CD}
\R^{+}\times \mathcal{Q} @ >  \mathrm{id}_{\R^{+}}\times \iota >> \R^{+}\times \widetilde{M} \\
@V  \varphi VV @VV  \widetilde{\varphi} V \\
\mathcal{Q} @>>\iota  > \widetilde{M}
\end{CD}
\end{equation} 

\noindent Hence, the fundamental vector field $Z\in \mathfrak{X}(\widetilde{M})$ for the action $\widetilde{\varphi}$ and 
the vector field $Z_{\mathcal{Q}} \in \mathfrak{X}(\mathcal{Q})$ are $\iota$-related, i.e.,
 $T_{g_{x}}\iota \cdot  Z_{\mathcal{Q}}(g_{x})=Z(\iota (g_{x}))
 $
for all $g_{x}\in \mathcal{Q}$.

\item For $Z\in \mathfrak{X}(\widetilde{M})$,  we have $\mathcal{L}_{Z}\widetilde{g}= 2 \widetilde{g},$ where $\mathcal{L}$ is the Lie derivative.

\item For any $g_{x}\in \mathcal{Q}$ and $\xi, \eta \in T_{g_{x}}\mathcal{Q}$, the following equality holds
$$
\iota^{*}(\widetilde{g})_{g_{x}}(\xi, \eta)= g_{x}(T_{g_{x}}\pi \cdot \xi,T_{g_{x}}\pi \cdot \eta ).
$$
In particular, we have $\iota^{*}(\widetilde{g})(Z_{\mathcal{Q}},-)=0.$
\end{enumerate}
\end{definition}

\noindent For a pre-ambient space $(\widetilde{M}, \widetilde{g})$ the metric $\widetilde{g}$ is called a pre-ambient metric.
The condition $\mathcal{L}_{Z}\widetilde{g}= 2 \widetilde{g}$ tells us that the vector field $Z$ is 
homothetic  with respect to the pre-ambient metric $\widetilde{g}$. 
\begin{remark}\label{050222}
{\rm The notion of ambient metric in \cite{FG} satisfies a normalisation condition. In fact, in order to obtain the uniqueness of the ambient Lorentzian metric $\widetilde{g}$, the ambient metric by Fefferman and Graham
imposes that  the Ricci tensor of the metric $\widetilde{g}$ vanishes to a certain order
(depending on the dimension) on $\mathcal{Q}$, see \cite{FG} for details. 
The pre-ambient space has been used by {\v C}ap and Gover in order to get the relationships to
 the standard tractors, see \cite{Cap}. 
}
\end{remark}

We  end this section with several comments from the point of view of  Lorentzian geometry  of the notion of pre-ambient space. 
\begin{definition}\label{210721A}
A lightlike manifold is a pair $(N,h)$ where $N$ is an $(n+1)$-dimensional smooth manifold with $n\geq 2$ and furnished with a lightlike metric $h$. That is, $h$ is a symmetric $(0,2)$-tensor field on $N$ such that
\begin{enumerate}
\item $h(\xi,\xi)\geq 0$ for all $\xi\in \mathfrak{X}(N)$. 
\item For every $y\in N$, the radical $\mathrm{Rad}(h)(y)=\{\xi\in T_{y}N: h(\xi,-)=0\}$   defines a $1$-dimensional distribution on $N$.
\end{enumerate}
A smooth immersion $\Psi\colon N^{n+1} \to (\widetilde{M}^{n+2}, \widetilde{g})$ in an arbitrary Lorentzian manifold is said to be a lightlike hypersurface when the induced tensor $\Psi^{*}(\widetilde{g})$ is a lightlike metric.

\end{definition}

\smallskip
Now, let $(M,c)$ be a Riemannian conformal structure on an $(n\geq 2)$-dimensional manifold $M$ and $(\widetilde{M}, \widetilde{g})$ a pre-ambient space  for $(M,c)$. Then, condition $(3)$ in Definition \ref{0706211} implies that $\iota\colon\mathcal{Q}\to \widetilde{M}$ is a lightlike hypersurface. Moreover,  the induced lightlike metric $h: =\iota^{*}(\widetilde{g})$ 
 does not depend on the particular pre-ambient metric $\widetilde{g}$. In the terminology of \cite{FG}, the lightlike metric $h$ is called the tautological tensor. The radical distribution $\mathrm{Rad}(h)$ is globally generated by the vector field $Z_{\mathcal{Q}}$.

 \smallskip

Recall that every choice of a metric $g\in c$ provides a section of $\pi \colon \mathcal{Q}\to M$ and conversely. The following result is well-known. We include here a proof for the sake of completeness.
\begin{lemma}\label{endomorfismo}
Let $(M,c)$ be a Riemannian conformal structure and $(\widetilde{M}, \widetilde{g})$ a pre-ambient space for $(M,c)$. For every $g\in c$, the map
$$
\Psi^{g}:=\iota \circ g\colon M\to (\widetilde{M},\widetilde{g})
$$
is a codimension two spacelike immersion with induced metric $ (\Psi^{g})^{*}(\widetilde{g})=g$. Moreover, the vector field $\xi:=Z\mid_{\Psi^g} $ is normal and lightlike along $\Psi^g$ with $A_{\xi}=-\mathrm{Id}.$
\end{lemma}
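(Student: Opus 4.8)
The plan is to verify the three assertions in turn, using only the structural properties of the scale bundle $\pi\colon\mathcal{Q}\to M$ and the defining conditions of an ambient metric recalled above.

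First I would identify the induced metric. A metric $g\in c$ is exactly a section of $\pi$, so $\pi\circ g=\mathrm{id}_M$ and hence $T_{g_x}\pi\circ T_x g=\mathrm{id}_{T_xM}$. Writing $h:=\iota^{*}(\widetilde{g})$ and $\Psi^{g}=\iota\circ g$, for $v,w\in T_xM$ condition $(2)$ of Definition~\ref{120221A} gives
\begin{equation*}
\big((\Psi^{g})^{*}\widetilde{g}\big)(v,w)=h_{g_x}(T_xg\cdot v,\,T_xg\cdot w)=g_x\big(T_{g_x}\pi\cdot T_xg\cdot v,\,T_{g_x}\pi\cdot T_xg\cdot w\big)=g_x(v,w).
\end{equation*}
Thus $(\Psi^{g})^{*}\widetilde{g}=g$ is Riemannian, so $\Psi^{g}$ is a spacelike immersion, of codimension $(n+2)-n=2$.

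Next I would show that $\xi=Z\mid_{\Psi^g}$ is normal and lightlike. Since $Z$ is $\iota$-related to $Z_{\mathcal{Q}}$, along the submanifold one has $\xi(x)=T_{g_x}\iota\cdot Z_{\mathcal{Q}}(g_x)$. As $\mathrm{Rad}(h)$ is globally generated by $Z_{\mathcal{Q}}$, we have $h(Z_{\mathcal{Q}},-)=0$, whence for every $W\in\mathfrak{X}(M)$,
\begin{equation*}
\widetilde{g}(\xi,\,T\Psi^{g}\cdot W)=h(Z_{\mathcal{Q}},\,T g\cdot W)=0
\qquad\text{and}\qquad
\widetilde{g}(\xi,\xi)=h(Z_{\mathcal{Q}},Z_{\mathcal{Q}})=0,
\end{equation*}
so $\xi$ is a lightlike normal vector field along $\Psi^{g}$.

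Finally, the heart of the statement is the computation of $A_{\xi}$. Since $\xi$ is the restriction of the globally defined field $Z$, the induced derivative coincides with the ambient one, $\widetilde{\nabla}_{V}\xi=\widetilde{\nabla}_{T\Psi^{g}\cdot V}Z$. The homothety condition $\mathcal{L}_{Z}\widetilde{g}=2\widetilde{g}$ reads $\widetilde{g}(\widetilde{\nabla}_{X}Z,Y)+\widetilde{g}(X,\widetilde{\nabla}_{Y}Z)=2\widetilde{g}(X,Y)$. Taking $X=T\Psi^{g}\cdot V$ and $Y=T\Psi^{g}\cdot W$ tangent, the Weingarten formula in (\ref{shape}) yields $\widetilde{g}(\widetilde{\nabla}_{T\Psi^{g}\cdot V}Z,\,T\Psi^{g}\cdot W)=-g(A_{\xi}V,W)$, since the normal term $\nabla^{\perp}_{V}\xi$ is $\widetilde{g}$-orthogonal to $T\Psi^{g}\cdot W$. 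Substituting, the homothety relation becomes
\begin{equation*}
-g(A_{\xi}V,W)-g(V,A_{\xi}W)=2\,g(V,W).
\end{equation*}
Here is the decisive point, which I expect to be the only subtle step: the homothety condition by itself controls merely the symmetric part of $\widetilde{\nabla}Z$, but the self-adjointness of the Weingarten endomorphism (equivalently, the symmetry of $\mathrm{II}$) makes the left-hand side equal to $-2\,g(A_{\xi}V,W)$. Hence $g(A_{\xi}V,W)=-g(V,W)$ for all $W$, and therefore $A_{\xi}=-\mathrm{Id}$, completing the proof; everything else is a direct unwinding of the definitions.
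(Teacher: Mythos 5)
Your proposal is correct and follows essentially the same route as the paper: pulling back $\iota^{*}\widetilde{g}$ through the section $g$ using $\pi\circ g=\mathrm{id}_M$, identifying $\xi$ with $T\iota\cdot Z_{\mathcal{Q}}$ and using $\iota^{*}(\widetilde{g})(Z_{\mathcal{Q}},-)=0$, and then feeding the homothety identity $\mathcal{L}_{Z}\widetilde{g}=2\widetilde{g}$ into the Weingarten formula. The only difference is cosmetic: where the paper invokes ``the polarization identity'' to pass from the symmetrized relation to $\widetilde{g}(\widetilde{\nabla}_{T\Psi^{g}\cdot V}Z,T\Psi^{g}\cdot W)=\widetilde{g}(T\Psi^{g}\cdot V,T\Psi^{g}\cdot W)$, you make explicit that this step is licensed by the self-adjointness of $A_{\xi}$, which is a correct (and arguably cleaner) justification of the same step.
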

\begin{proof}
For every $x\in M$ a direct computation gives
\[
(\Psi^{g})^{*}(\widetilde{g})_x=g^{*} (\iota^{*}(\widetilde{g})_{g_x})= g^{*} (\pi^{*}(g)_{g_x})=(\pi \circ g)^{*}(g)_x=g_x.
\]
Taking into account that 
$
\xi_x=Z(\Psi^{g}(x))=T_{g_x}\iota\cdot Z_{\mathcal{Q}}(g_x),
$
we get $\xi\in \mathfrak{X}^{\perp}(M)$ (for the immersion $\Psi^{g}$)  and $\widetilde{g}(\xi, \xi)=0$.
In order to see that $A_{\xi}=-\mathrm{Id}$, recall that the condition $\mathcal{L}_{Z}\widetilde{g}= 2 \widetilde{g}$ is equivalent to
$$
\widetilde{g}(\widetilde{\nabla}_X Z, Y)+\widetilde{g}(X, \widetilde{\nabla}_Y Z)= 2\widetilde{g}(X,Y), \quad X,Y \in \mathfrak{X}(\widetilde{M}).
$$
In particular, for vector fields $V, W\in \mathfrak{X}(M)$ we have
$$
\widetilde{g}(\widetilde{\nabla}_{T\Psi^{g} \cdot V} Z, T\Psi^{g} \cdot W)+\widetilde{g}(T\Psi^{g}\cdot V, \widetilde{\nabla}_{T\Psi^{g}\cdot W} Z)= 2\widetilde{g}(T\Psi^{g} \cdot V,T\Psi^{g} \cdot W),
$$
and from the polarization identity we arrive to 
$$
\widetilde{g}\Big(\widetilde{\nabla}_{T\Psi^{g} \cdot V} Z, T\Psi^{g} \cdot W\Big)=\widetilde{g}(T\Psi^{g} \cdot V, T\Psi^{g}\cdot W).
$$
We are in position to compute $\widetilde{\nabla}_V\xi$ as follows
$$
\widetilde{\nabla}_{V}\xi=\widetilde{\nabla}_{T\Psi^{g} \cdot V}Z=(\widetilde{\nabla}_{T\Psi^{g} \cdot V}Z)^{\top}+(\widetilde{\nabla}_{T\Psi^{g} \cdot V}Z)^{\perp}=T\Psi^{g} \cdot V+\nabla^{\perp}_{V}\xi
$$
and now the assertion $A_{\xi}=-\mathrm{Id}$ is clear.
\end{proof}

\section{A method to construct pre-ambient spaces}\label{ambient metrics}

\noindent Let $(M,c)$ be a Riemannian conformal structure on an $(n\geq 2)$-dimensional manifold $M$.

\begin{definition}\label{060923A}
A smooth $1$-parameter family $\alpha\colon \R \to  \mathcal{T}_{(1,1)}M$  is called  admissible when
\begin{enumerate}
\item $\alpha(\rho)$ is a self-adjoint tensor field with respect to any representative $g\in c$,
\item   $\alpha(0)=\mathrm{Id}$, 
\item there is $\delta >0$ such that $\alpha(\rho)$ is not singular for $|\rho|< \delta$.
\end{enumerate}
\end{definition} 
\noindent Here, the smoothness of $\alpha$ means that for every $V\in \mathfrak{X}(M)$ and $x\in M$, there exists 
$$
\dot{\alpha}(\rho)(V_{x})=\lim_{\varepsilon\to 0}\frac{\alpha(\rho+\varepsilon)(V_{x})- \alpha(\rho)(V_{x})}{\varepsilon}\in T_{x}M.
$$
In particular, we have $\dot{\alpha}(0)\in \mathcal{T}_{(1,1)}M$.

\begin{remark}
{\rm The condition $(3)$ in the above definition can be deleted when $M$ is compact and,
at least locally, $\delta$ always exists, in the general case.
}
\end{remark}

Let us fix a metric $g\in c$ and an admissible  smooth $1$-parameter family $\alpha\colon \R \to  \mathcal{T}_{(1,1)}M$. For every $\rho \in \R$, we define the following symmetric tensor on $M$,
\[
\langle V,W\rangle_{\rho}^{g}=g\Big(\alpha(\rho)(V), W\Big).
\]
Clearly, $\langle \,,\,\rangle_{0}^{g}=g$ and  so $\langle \,,\,\rangle_{\rho}^{g}$ can be seen as a  $1$-parameter deformation of the metric $g$.
Moreover, 
$\langle \,,\,\rangle_{\rho}^{g}$ is positive definite on $M$ for $|\rho|< \delta$.
Henceforth, let us consider the manifold $\widetilde{M}:= B\times M$, where $B:=\R^{+}\times (-\delta, +\delta)$ with coordinates $(t, \rho)$.  This manifold $\widetilde{M}$ can be endowed  with the Lorentzian metric
\begin{equation}\label{220221A}
\widetilde{g}= d(\rho t)\otimes dt+ dt \otimes d(\rho t)+ t^2\langle -,- \rangle_{\rho}^{g}
\end{equation}
and  with the free 
$\R^{+}$-action $\widetilde{\varphi}(\tau, (t,  \rho, x))=(\tau t, \rho, x)$.
The choice of the metric $g\in c$ provides the global trivialization of $\pi\colon \mathcal{Q}\to M$ given by
$$t^{2}g_{x}\in \mathcal{Q}\mapsto (t, x)\in \R^{+}\times M$$ and the following embedding of  $\mathcal{Q}$ in $\widetilde{M}$ at $\rho=0$, 
\begin{equation}\label{130721D}
\iota_{g} \colon \mathcal{Q}\to \widetilde{M}, \quad t^{2}g_{x}\mapsto (t, 0, x).
\end{equation}

A direct computation shows that
$
\iota_{g} \circ \varphi(\tau, t^{2} g_{x})=\widetilde{\varphi}\circ (\mathrm{id}_{\R^{+}}\times \iota_{g})(\tau, t^2 g_{x})=(\tau t, 0, x)
$
. On the other hand, the fundamental vector field $Z\in \mathfrak{X}(\widetilde{M})$ corresponding to the action $\widetilde{\varphi}$ is 
$
Z=t\frac{\partial}{\partial t}
$
and one directly checks that $\mathcal{L}_{Z}\widetilde{g}=2 \widetilde{g}.$  
Finally,  for $t^2 g_{x}\in \mathcal{Q}$ and $\xi, \eta \in T_{t^{2}g_{x}}\mathcal{Q}$, we have
$$
(\iota^{*}_{g}\widetilde{g})_{t^{2}g_{x}}(\xi, \eta)= \widetilde{g}_{(t,0,x)}(T_{t^{2}g_{x}}\iota_{g}\cdot \xi, T_{t^{2}g_{x}}\iota_{g}\cdot \eta)=t^{2}g_{x}(T_{t^{2}g_{x}}\pi \cdot \xi,T_{t^{2}g_{x}}\pi \cdot \eta ).
$$
Hence, $(\widetilde{M}=B\times M,\widetilde{g})$ where the metric $\widetilde{g}$ is given in (\ref {220221A}) is a pre-ambient space for $(M, c)$. 

We have thus led to the following result. 

\begin{proposition}\label{280321D}
Let $(M,c)$ be a Riemannian conformal structure on an $(n\geq 2)$-dimensional manifold $M$. For every choice of a metric $g\in c$ and an admissible  smooth $1$-parameter family $\alpha\colon \R \to  \mathcal{T}_{(1,1)}M$, the manifold $\widetilde{M}=B\times M$ is a pre-ambient space for $(M,c)$.
\end{proposition}

\begin{remark}\label{280221A}
{\rm In the particular case that $\alpha(\rho)=f^{2}(\rho)\,\mathrm{Id}$ with $f\colon (-\delta, +\delta) \to \R$, $f(0)=1$ and $f>0$, the pre-ambient space $(\widetilde{M}, \widetilde{g})$ with metric $\widetilde{g}= d(\rho t)\otimes dt+ dt \otimes d(\rho t)+(t f(\rho))^{2}g$ is a warped product in the terminology of \cite[Chap. 7]{One83}.
}
\end{remark}

\begin{remark}\label{dw}
{\rm The one-form $\omega$ metrically equivalent to the vector field $Z$ is 
$$
\omega=t^2 d\rho + 2t\rho dt,
$$
thus, we have $d \omega=0$. 
}
\end{remark}

%
%

\bigskip

As a Lorentzian manifold, the pre-ambient space $(\widetilde{M},\widetilde{g})$  is timelike orientable, that is, there exists a globally defined timelike vector field, namely,
	\begin{equation}\label{130721A}
	T:=\frac{1}{t}\partial_{t}- \Big(1+\frac{\rho}{t^{2}}\Big)\partial_{\rho}\in \mathfrak{X}(\widetilde{M}),
	\end{equation}
	which satisfies $\widetilde{g}(T,T)=-2$. To be used later, we also introduce the spacelike vector field
	\begin{equation}\label{130721B}
	E:=\frac{1}{t}\partial_{ t}+ \Big(1- \frac{\rho}{t^2}\Big)\partial_{ \rho}\in \mathfrak{X}(\widetilde{M}),
	\end{equation}
	with $\widetilde{g}(E,E)=2$ and $\widetilde{g}(T,E)=0.$
The set of all natural lifts of vector fields $V \in \mathfrak{X}(M)$ to $\mathfrak{X}(\widetilde{M})$ is denoted by $ \mathfrak{L}(M)$.  For a vector field $V \in \mathfrak{X}(M)$,  its lift to $\mathfrak{L}(M)\subset \mathfrak{X}(\widetilde{M})$ is also denoted by $V$. 

As was mentioned in Remark \ref{280221A}, the metrics $\widetilde{g}$ in (\ref{220221A}) are not warped product metrics, in general. Hence, the formulas for the Levi-Civita connection of warped products metrics in \cite[Prop. 7.36]{One83} do not work.

\begin{proposition}\label{220221B}
	The Levi-Civita connection $\widetilde{\nabla}$ of $(\widetilde{M}, \widetilde{g})$ satisfies
	\begin{equation}\label{conexion M tilde}
		\widetilde{\nabla}_{\partial_{t}} \partial_{t}=\widetilde{\nabla}_{\partial_{\rho}} \partial_{\rho}=0, \quad \quad \widetilde{\nabla}_{\partial_{t}}\partial_{ \rho}=\widetilde{\nabla}_{\partial_{\rho}}\partial_{t}=\dfrac{1}{t}\partial_{\rho},
\end{equation}
\begin{equation}\label{240221A}		
		\widetilde{\nabla}_{\partial_{t}}V=\dfrac{1}{t}V,\quad \widetilde{\nabla}_{\partial_{\rho}}V=\frac{1}{2}\alpha(\rho)^{-1}(\dot{\alpha}(\rho)(V)),
\end{equation}	
\begin{equation}\label{150721A}
\widetilde{\nabla}_{V}W\mid_{\iota_{g}(\mathcal{Q})}=-\frac{1}{2t}\widetilde{g}(\dot{\alpha}(0)(V), W)\partial_{t}-\frac{1}{t^2}\widetilde{g}(V,W)\partial_{\rho}+ \nabla^{g}_{V}W,
\end{equation}	
where $V, W\in \mathfrak{L}(M)$.	
\end{proposition}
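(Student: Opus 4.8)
The plan is to compute every covariant derivative directly from the Koszul formula $2\widetilde{g}(\widetilde{\nabla}_X Y,Z)=X\widetilde{g}(Y,Z)+Y\widetilde{g}(X,Z)-Z\widetilde{g}(X,Y)+\widetilde{g}([X,Y],Z)-\widetilde{g}([X,Z],Y)-\widetilde{g}([Y,Z],X)$, letting $X,Y,Z$ run over $\partial_{t}$, $\partial_{\rho}$ and lifts in $\mathfrak{L}(M)$, and then reading off $\widetilde{\nabla}_X Y$ by nondegeneracy of $\widetilde{g}$. The first preparatory step is to expand $d(\rho t)=\rho\,dt+t\,d\rho$ in (\ref{220221A}) to obtain the coefficients $\widetilde{g}(\partial_{t},\partial_{t})=2\rho$, $\widetilde{g}(\partial_{t},\partial_{\rho})=t$, $\widetilde{g}(\partial_{\rho},\partial_{\rho})=0$, together with $\widetilde{g}(\partial_{t},V)=\widetilde{g}(\partial_{\rho},V)=0$ and $\widetilde{g}(V,W)=t^{2}g(\alpha(\rho)(V),W)$ for $V,W\in\mathfrak{L}(M)$; in particular the splitting $T\widetilde{M}=\mathrm{span}(\partial_{t},\partial_{\rho})\oplus\mathfrak{L}(M)$ is $\widetilde{g}$-orthogonal. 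The second preparatory step records the brackets, which all vanish except $[V,W]$, equal to the lift of the bracket on $M$, together with the derivatives $\partial_{t}\widetilde{g}(V,W)=\tfrac{2}{t}\widetilde{g}(V,W)$ and $\partial_{\rho}\widetilde{g}(V,W)=t^{2}g(\dot{\alpha}(\rho)(V),W)$, while lifts annihilate any function of $(t,\rho)$ alone and $\partial_{t},\partial_{\rho}$ annihilate functions pulled back from $M$.

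With these data the identities (\ref{conexion M tilde}) and (\ref{240221A}) follow by routine substitution: testing $\widetilde{\nabla}_{\partial_{t}}\partial_{t}$, $\widetilde{\nabla}_{\partial_{\rho}}\partial_{\rho}$ and $\widetilde{\nabla}_{\partial_{t}}\partial_{\rho}$ against $\partial_{t},\partial_{\rho},W$ gives zero in all slots except the ones recovering $\widetilde{g}(\tfrac{1}{t}\partial_{\rho},\cdot)$, and for (\ref{240221A}) testing against a lift $W$ yields $2\widetilde{g}(\widetilde{\nabla}_{\partial_{t}}V,W)=\tfrac{2}{t}\widetilde{g}(V,W)$ and $2\widetilde{g}(\widetilde{\nabla}_{\partial_{\rho}}V,W)=t^{2}g(\dot{\alpha}(\rho)(V),W)=2\widetilde{g}(\tfrac{1}{2}\alpha(\rho)^{-1}\dot{\alpha}(\rho)(V),W)$, the last equality using $\widetilde{g}(\alpha(\rho)^{-1}(\cdot),W)=t^{2}g(\cdot,W)$. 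The only mild subtlety here is that, because of the off-diagonal coefficient $\widetilde{g}(\partial_{t},\partial_{\rho})=t$, the $B$-block of $\widetilde{g}$ is not diagonal; reading off the $\partial_{t}$- and $\partial_{\rho}$-components of a vector field therefore requires inverting the nondegenerate matrix $\bigl(\begin{smallmatrix}2\rho & t\\ t & 0\end{smallmatrix}\bigr)$, equivalently solving the two scalar equations obtained by pairing against $\partial_{t}$ and $\partial_{\rho}$.

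For the last formula (\ref{150721A}) I would take $X=V$, $Y=W$ lifts and test against $\partial_{t}$, $\partial_{\rho}$ and a third lift $U$. Pairing against $\partial_{t}$ and $\partial_{\rho}$ gives $\widetilde{g}(\widetilde{\nabla}_V W,\partial_{t})=-\tfrac{1}{t}\widetilde{g}(V,W)$ and $\widetilde{g}(\widetilde{\nabla}_V W,\partial_{\rho})=-\tfrac{1}{2}t^{2}g(\dot{\alpha}(\rho)(V),W)$; restricting to $\iota_{g}(\mathcal{Q})$, that is to $\rho=0$ where $\alpha(0)=\mathrm{Id}$, and solving the $2\times2$ system above produces exactly the coefficients $-\tfrac{1}{2t}\widetilde{g}(\dot{\alpha}(0)(V),W)$ of $\partial_{t}$ and $-\tfrac{1}{t^{2}}\widetilde{g}(V,W)$ of $\partial_{\rho}$. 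The decisive step is the pairing against $U$: at $\rho=0$ the six Koszul terms factor as $t^{2}$ times $V g(W,U)+W g(V,U)-U g(V,W)+g([V,W],U)-g([V,U],W)-g([W,U],V)$, which is precisely $2t^{2}g(\nabla^{g}_V W,U)=2\widetilde{g}(\nabla^{g}_V W,U)$ by the Koszul formula for $(M,g)$, identifying the $M$-tangential part with $\nabla^{g}_V W$ and completing the proof. I expect this last identification to be the only genuine obstacle: it is exactly here that the restriction to $\rho=0$ is essential, since for general $\rho$ the tangential part would be the Levi-Civita connection of $\langle\,,\,\rangle^{g}_{\rho}$ and the corresponding expression would carry the $M$-directional derivatives of $\alpha(\rho)$ rather than collapsing to $\nabla^{g}$.
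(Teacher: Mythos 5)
Your proposal is correct and follows essentially the same route as the paper: everything is extracted from the Koszul formula, the mixed and $B$-block derivatives first, then the normal and tangential components of $\widetilde{\nabla}_V W$ at $\rho=0$, with the tangential part identified with $\nabla^g_V W$ because the Koszul terms factor through $t^2$ times the Koszul formula of $(M,g)$. The only (immaterial) differences are that the paper reads off the normal part of $\widetilde{\nabla}_V W$ using the orthonormal pair $T,E$ rather than inverting the matrix $\bigl(\begin{smallmatrix}2\rho & t\\ t & 0\end{smallmatrix}\bigr)$, and it evaluates the tangential pairing in a frame whose brackets vanish at the point, whereas you keep the bracket terms and observe they also scale by $t^2$.
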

\begin{proof} A direct consequence of Koszul formula for the Levi-Civita connection of $(\widetilde{M}, \widetilde{g})$ shows $\widetilde{\nabla}_{\partial_{t}} \partial_{t}=\widetilde{\nabla}_{\partial_{\rho}} \partial_{\rho}=0$ and $\nabla_{\partial_{t}}\partial_{\rho}=\dfrac{1}{t}\partial_{\rho}$.
On the other hand, the Koszul formula also implies $ \widetilde{g}(\widetilde{\nabla}_{\partial_{t}}V, \partial_{t})= \widetilde{g}(\widetilde{\nabla}_{\partial_{t}}V, \partial_{\rho})=0$ and $ 2\widetilde{g}(\widetilde{\nabla}_{\partial_{t}}V, W)=\partial_{t}\widetilde{g}(V,W).$ By definition of the metric $\widetilde{g}$,
$$
\partial_{t}\widetilde{g}(V,W)=2t g(\alpha(\rho)(V),W)= \frac{2}{t}\widetilde{g}(V,W),
$$
and then we get $\widetilde{\nabla}_{\partial_{t}}V=\dfrac{1}{t}V.$ In the same manner, we compute
$$
2\widetilde{g}(\widetilde{\nabla}_{\partial_{\rho}}V, W)=\partial_{\rho}\Big(t^{2}g(\alpha(\rho)(V), W)\Big)=t^{2}g(\dot{\alpha}(\rho)(V), W)= \widetilde{g}\Big(\alpha(\rho)^{-1}(\dot{\alpha}(\rho)(V)), W\Big).
$$ 
From (\ref{240221A}), it follows that 
$$\widetilde{g}(\widetilde{\nabla}_{V}W, \partial_{t})=- \widetilde{g}(\widetilde{\nabla}_{V}\partial_{t}, W)=-\frac{1}{t}\widetilde{g}(V,W), \quad\widetilde{g}(\widetilde{\nabla}_{V}W, \partial_{\rho})=-\frac{1}{2}\widetilde{g}\Big(\alpha(\rho)^{-1}(\dot{\alpha}(\rho)(V)),W\Big).$$ 
In order to compute $\widetilde{g}(\widetilde{\nabla}_{V_{p}}W, U_{p})$ for $U\in \mathfrak{L}(M)$ and $p=(t, 0,x)\in \iota_{g}(\mathcal{Q})$, we can assume $U,V,W\in \mathfrak{L}(M)$ so that all their brackets are zero at the point $p$. Then, the  Koszul formula yields
\[\begin{split}
 2\widetilde{g}(\widetilde{\nabla}_{V_{p}}W, U_{p}) & =V_{p}\, \widetilde{g}(W, U)+W_{p}\, \widetilde{g}(V, U)- U_{p}\, \widetilde{g}(V, W) \\
 & = t^{2}\Big(V_{x}\, g(W, U)+W_{x}\, g(V, U)- U_{x}\, g(V, W) \Big)\\
 & = 2t^{2}g(\nabla^{g}_{V_{x}}W, U_{x})= 2\widetilde{g}\Big(\left(\nabla^{g}_{V}W\right)_{p}, U_{p}\Big).
\end{split}
\]
Therefore, we conclude that
\[
\begin{split}
\widetilde{\nabla}_{V}W\mid_{\iota_{g}(\mathcal{Q})} & = -\frac{1}{2}\widetilde{g}(\widetilde{\nabla}_{V}W, T)T+ \frac{1}{2}\widetilde{g}(\widetilde{\nabla}_{V}W, E)E+ \nabla^{g}_{V}W \\
 & = -\frac{1}{2t}\widetilde{g}(\dot{\alpha}(0)(V), W)\partial_{t}-\frac{1}{t^2}\widetilde{g}(V,W)\partial_{\rho}+ \nabla^{g}_{V}W.
\end{split}
\]
\end{proof}

\begin{remark}\label{II}
{\rm Let us fix $(t,\rho)\in B$ and consider the spacelike submanifold
$$\mathcal{F}\vcentcolon=\{(t,\rho)\}\times M\subset\widetilde{M}.$$
The vector fields $T |_{\mathcal{F}}$ and $E |_{\mathcal{F}}$ span the normal bundle of $\mathcal{F}$ and 
Proposition \ref{220221B} implies 
$$
\widetilde{\nabla}_{V}T |_{\mathcal{F}}= \frac{1}{t^2}V-\frac{1}{2}\Big(1+ \frac{\rho}{t^2}\Big)\alpha(\rho)^{-1}(\dot{\alpha}(\rho)(V))\,\, \mathrm{ and }\,\,
$$
$$
\widetilde{\nabla}_{V}E|_{\mathcal{F}}= \frac{1}{t^2}V+\frac{1}{2}\Big(1- \frac{\rho}{t^2}\Big)\alpha(\rho)^{-1}(\dot{\alpha}(\rho)(V)),
$$
for every $V \in \mathcal{L}(M).$
Therefore, the second fundamental form $\mathrm{II}_{\mathcal{F}}$ is given by
\begin{equation}\label{280221B}
\mathrm{II}_{\mathcal{F}}(V,W)=-\frac{1}{2t}\widetilde{g}\big(\alpha(\rho)^{-1}(\dot{\alpha}(\rho)(V)), W\big)\partial_{t} - \frac{1}{t^2}\Big(\widetilde{g}(V,W)- \rho \widetilde{g}\big(\alpha(\rho)^{-1}(\dot{\alpha}(\rho)(V)), W\big)\Big)\partial_{\rho}
\end{equation}
where $V, W \in \mathfrak{X}(M).$ 
Thus, on the contrary to the warped products metrics, the fibers $\mathcal{F}$ are not totally umbilical,  in general. It is not difficult to show that for a fixed $(t, \rho)$, the corresponding fiber $\mathcal{F}$ is totally umbilical if and only if the endomorphism field $\alpha(\rho)^{-1}\circ \dot{\alpha}(\rho)=f\mathrm{Id}$ for some $f\in\mathcal{C}^{\infty}(M)$.
}
\end{remark}

\begin{remark}
{\rm
For $\alpha(\rho)=f^{2}(\rho)\, \mathrm{Id}$ with $f>0$, the metric $\widetilde{g}$ is a warped metric with  warping function $h(t, \rho)= t f(\rho)$. In this case (\ref{280221B}) reduces to
$$
\mathrm{II}_{\mathcal{F}}(V,W)=-\frac{\widetilde{g}(V,W)}{t f(\rho)}\Big(f'(\rho)\partial_{t}+\frac{f(\rho)- 2 \rho f'(\rho)}{t}\partial_{\rho} \Big).
$$
A direct computation shows that the above formula agrees with \cite[Prop.  7. 35 (3)]{One83}.}
\end{remark}

From \cite{Cap}, the Ricci tensor $\widetilde{\mathrm{Ric}}$ of any pre-ambient space $(\widetilde{M}, \widetilde{g})$ 
restricted to $\iota_{g}(\mathcal{Q})$ satisfies
\begin{equation}\label{120721A}
\widetilde{\mathrm{Ric}}|_{\iota_{g}(\mathcal{Q})}(\partial_{t},\partial_{t})=\widetilde{\mathrm{Ric}}|_{\iota_{g}(\mathcal{Q})}(\partial_{t},V)=0, \quad V\in\mathcal{L}(M)
\end{equation}
if and only if $d\omega|_{\iota_{g}(\mathcal{Q})}=0$. As consequence of Remark \ref{dw}, this  formula (\ref{120721A}) holds for the metric $\widetilde{g}$ in (\ref{220221A}).
The following result provides the other component of $\widetilde{\mathrm{Ric}}$ on $\iota_{g}(\mathcal{Q})$.

\begin{corollary}\label{ricci}
The Ricci tensor $\widetilde{\mathrm{Ric}}$ of $(\widetilde{M}, \widetilde{g})$ satisfies
\begin{equation}\label{220721A}
\widetilde{\mathrm{Ric}}|_{\iota_{g}(\mathcal{Q})}(V,W)=\mathrm{Ric}^g(V,W)-\dfrac{\mathrm{trace}(\dot{\alpha}(0))}{2}g(V,W)-\left(\dfrac{n-2}{2}\right)g\left(\dot{\alpha}(0)(V),W\right),
\end{equation}
where $V, W\in \mathfrak{L}(M)$. For $\xi,\eta\in\mathfrak{X}(\mathcal{Q})$, we have
\begin{itemize}
\item If  $n=2$,
$$\widetilde{\mathrm{Ric}}|_{\iota_{g}(\mathcal{Q})}(T\iota_{g}\cdot \xi,T\iota_{g}\cdot \eta)= 0$$ if and only if $\mathrm{trace}(\dot{\alpha}(0))=2K^g$,  where $K^g$ is the Gauss curvature of  $g$.
\item If $n\geq 3$, 
$$\widetilde{\mathrm{Ric}}|_{\iota_{g}(\mathcal{Q})}(T\iota_{g}\cdot \xi,T\iota_{g}\cdot \eta)= 0$$  if and only if $g(\dot{\alpha}(0)(-), -)=2P^g$,  where $P^g$ is the Schouten tensor of $g$.
\end{itemize}
\end{corollary}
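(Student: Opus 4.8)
My plan is to evaluate the Ricci contraction of $\widetilde{g}$ at $\rho=0$ in a frame adapted to the leaf $\mathcal{F}=\{(t,0)\}\times M$, splitting it into a tangential part handled by the Gauss equation and a normal part spanned by the two lightlike directions, which I show vanishes. The preliminary observation that controls the normal part is that the homothetic field $Z=t\partial_t$ satisfies $\widetilde{\nabla}_X Z=X$ for every $X\in\mathfrak{X}(\widetilde{M})$, as read off directly from Proposition \ref{220221B} by testing on $\partial_t$, on $\partial_\rho$ and on lifts $V\in\mathfrak{L}(M)$. Since $\widetilde{\nabla}$ is torsion-free, $\widetilde{\nabla}Z=\mathrm{Id}$ forces $\widetilde{R}(X,Y)Z=\widetilde{\nabla}_X Y-\widetilde{\nabla}_Y X-[X,Y]=0$ for all $X,Y$, and hence $\widetilde{R}(X,Y)\partial_t=0$ everywhere. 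This reproves the vanishing in (\ref{120721A}) and, more usefully, annihilates every curvature term carrying $\partial_t$ in a slot.

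Next I would fix a local $g$-orthonormal frame $\{u_1,\dots,u_n\}$ on $M$ and, along $\iota_g(\mathcal{Q})$ (where $\alpha(0)=\mathrm{Id}$, so $\widetilde{g}(V,W)=t^2 g(V,W)$), assemble the $\widetilde{g}$-orthonormal frame $\{T/\sqrt2,\,E/\sqrt2,\,u_1/t,\dots,u_n/t\}$, with $T$ from (\ref{130721A}) timelike and the remaining ones spacelike. Contracting the curvature over this frame yields
\[
\widetilde{\mathrm{Ric}}(V,W)=-\tfrac12\widetilde{g}(\widetilde{R}(T,V)W,T)+\tfrac12\widetilde{g}(\widetilde{R}(E,V)W,E)+\tfrac1{t^2}\sum_i\widetilde{g}(\widetilde{R}(u_i,V)W,u_i).
\]
At $\rho=0$ one has $T=\tfrac1t\partial_t-\partial_\rho$ and $E=\tfrac1t\partial_t+\partial_\rho$, so using $\widetilde{R}(X,Y)\partial_t=0$ together with the curvature symmetries, the two normal terms collapse to $\tfrac1t\widetilde{g}(\widetilde{R}(\partial_\rho,V)W,\partial_t)=0$; only the tangential sum survives.

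For that sum I apply the Gauss equation (with the sign convention opposite to \cite{One83}) to the spacelike leaf $\mathcal{F}$, whose induced metric is the constant rescaling $t^2 g$ and whose second fundamental form at $\rho=0$ is, by (\ref{280221B}), $\mathrm{II}_{\mathcal{F}}(V,W)=-\tfrac t2 g(\dot{\alpha}(0)(V),W)\partial_t-g(V,W)\partial_\rho$. Because a constant conformal factor leaves the $(0,2)$ Ricci tensor unchanged, the intrinsic trace reproduces $\mathrm{Ric}^g(V,W)$, while the quadratic shape terms, paired with the neutral normal metric $\widetilde{g}(\partial_t,\partial_t)=\widetilde{g}(\partial_\rho,\partial_\rho)=0$, $\widetilde{g}(\partial_t,\partial_\rho)=t$, contribute exactly $-\tfrac12\mathrm{trace}(\dot{\alpha}(0))\,g(V,W)-\tfrac{n-2}2 g(\dot{\alpha}(0)(V),W)$, giving (\ref{220721A}). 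I expect the bookkeeping of these $\langle\mathrm{II},\mathrm{II}\rangle$ pairings over the split normal plane, and keeping the Gauss-equation sign consistent, to be the only delicate point.

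Finally, for the two dichotomies I note that every $T\iota_g\cdot\xi$ splits as $a\partial_t+V$ with $V\in\mathfrak{L}(M)$, so by the vanishing above $\widetilde{\mathrm{Ric}}(T\iota_g\cdot\xi,T\iota_g\cdot\eta)=\widetilde{\mathrm{Ric}}(V,W)$. When $n=2$ one has $\mathrm{Ric}^g=K^g g$ and the $\dot{\alpha}(0)$-term in (\ref{220721A}) disappears, so vanishing for all $V,W$ is equivalent to $\mathrm{trace}(\dot{\alpha}(0))=2K^g$. When $n\ge 3$, setting (\ref{220721A}) to zero and solving for $g(\dot{\alpha}(0)(-),-)$ gives $g(\dot{\alpha}(0)(-),-)=\tfrac2{n-2}\mathrm{Ric}^g-\tfrac{\mathrm{trace}(\dot{\alpha}(0))}{n-2}g$; taking the $g$-trace forces $\mathrm{trace}(\dot{\alpha}(0))=\mathrm{scal}^g/(n-1)$ automatically, and substituting this back identifies the right-hand side with $2P^g$ by the very definition of the Schouten tensor, which is the asserted equivalence.
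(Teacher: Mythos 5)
Your proposal is correct, and it reaches (\ref{220721A}) by a genuinely different route than the paper, even though both start from the same adapted frame $\{T/\sqrt2, E/\sqrt2, e_i/t\}$ and the same Ricci contraction. Where the paper disposes of the normal-direction terms by verifying the cancellation (\ref{130721C}) through a direct computation from Proposition \ref{220221B}, you prove the stronger and cleaner statement $\widetilde{\nabla}Z=\mathrm{Id}$ (which indeed follows from (\ref{conexion M tilde}), (\ref{240221A}) and torsion-freeness, since $[\partial_t,V]=0$ for lifts $V$), hence $\widetilde{R}(\cdot,\cdot)Z=0$ everywhere on $\widetilde{M}$; this annihilates every curvature term carrying $\partial_t$, reproves (\ref{120721A}) without invoking the criterion $d\omega=0$ cited from \cite{Cap}, and kills the cross terms at one stroke. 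Where the paper evaluates the tangential sum $\sum_i\widetilde{g}(\widetilde{R}(E_i,V)W,E_i)$ by brute force from the connection formulas, you instead apply the Gauss equation to the fiber $\mathcal{F}=\{(t,0)\}\times M$, reusing the second fundamental form (\ref{280221B}) of Remark \ref{II}; I checked the pairing of the $\mathrm{II}$-terms against the neutral normal metric $\widetilde{g}(\partial_t,\partial_t)=\widetilde{g}(\partial_\rho,\partial_\rho)=0$, $\widetilde{g}(\partial_t,\partial_\rho)=t$, and it does produce exactly $-\tfrac12\mathrm{trace}(\dot{\alpha}(0))\,g(V,W)-\tfrac{n-2}{2}\,g(\dot{\alpha}(0)(V),W)$, so the delicate step you flagged goes through. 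Your treatment of the two dichotomies (trace argument for $n\geq 3$ forcing $\mathrm{trace}(\dot{\alpha}(0))=\mathrm{scal}^g/(n-1)$ and then identifying $2P^g$) is the same ``direct consequence'' the paper intends, just written out. What your approach buys is conceptual economy — it leverages the homothetic-and-closed character of $Z$ and extrinsic data already computed in the paper, and yields curvature information valid on all of $\widetilde{M}$ rather than only along $\iota_g(\mathcal{Q})$; what the paper's approach buys is self-containedness, needing nothing beyond the Koszul-derived connection.
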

\begin{proof}
Let $(e_{1}, \dots, e_{n})$ be an orthonormal local frame on $(M,g)$ and consider the orthonormal local frame for  $(\widetilde{M},\widetilde{g})$ on  $\rho=0$ given by
 $$
 \left(\frac{1}{\sqrt{2}}T,\frac{1}{\sqrt{2}}E,E_{1}, \dots , E_{n}\right),
 $$ 
 where $E_i=\frac{1}{t}e_i$ and the vector fields $T,E$ are given in (\ref{130721A}) and (\ref{130721B}), respectively.  Then, we get
\[
\begin{split}
\widetilde{\mathrm{Ric}}|_{\iota_{g}(\mathcal{Q})}(V,W) & =\displaystyle\sum_{i=1}^n\widetilde{g}\left(\widetilde{\mathrm{R}}(E_i,V)W,E_i\right)+\dfrac{1}{2}\widetilde{g}\left(\widetilde{\mathrm{R}}(E,V)W,E\right)-\dfrac{1}{2}\widetilde{g}\left(\widetilde{\mathrm{R}}(T,V)W,T\right)\\
& = \displaystyle\sum_{i=1}^n\widetilde{g}\left(\widetilde{\mathrm{R}}(E_i,V)W,E_i\right)+\dfrac{1}{t}\left(\widetilde{g}\left(\widetilde{\mathrm{R}}(\partial_{ t},V)W,\partial_{\rho}\right)+\widetilde{g}\left(\widetilde{\mathrm{R}}(\partial_{\rho},V)W,\partial_{t}\right)\right).
\end{split}
\]
For every vector field $X\in\mathfrak{X}(\widetilde{M})$, we have the following decomposition
\[
\begin{split}
X & = \displaystyle\sum_{i=1}^n f_iE_i+\dfrac{1}{2}\widetilde{g}\left(X,E\right)E-\dfrac{1}{2}\widetilde{g}\left(X,T\right)T \\
 & = \displaystyle\sum_{i=1}^n f_iE_i+\dfrac{1}{t}\left(\widetilde{g}\left(X,\partial_{t}\right)\partial_{\rho}+\widetilde{g}\left(X,\partial_{\rho}\right)\partial_{ t}\right)-\dfrac{2\rho}{t^2}\widetilde{g}\left(X,\partial_{\rho}\right)\partial_{\rho}
\end{split}
\]
where $f_i\in\mathcal{C}^{\infty}(\widetilde{M})$. Let us note that $f_i|_{\rho=0}=\widetilde{g}(X,E_i)$. Now, a straightforward  computation from Proposition \ref{220221B} gives
\begin{equation}\label{130721C}
\widetilde{g}\left(\widetilde{\mathrm{R}}(\partial_{ t},V)W,\partial_{\rho}\right)+\widetilde{g}\left(\widetilde{\mathrm{R}}(\partial_{\rho},V)W,\partial_{t}\right)=0.
\end{equation}
Finally,  it is a standard computation,  from Proposition \ref{220221B} and (\ref{130721C}), to check that
\[
\begin{split}
\widetilde{\mathrm{Ric}}|_{\iota_{g}(\mathcal{Q})}(V,W) & = \sum_{i=1}^n g\left(\nabla^g_{e_i}\nabla^g_V W,e_i\right)-\sum_{i=1}^n g\left(\nabla^g_{V}\nabla^g_{e_i} W,e_i\right)-\displaystyle\sum_{i=1}^n g\left(\nabla^g_{[e_i,V]}W,e_i\right)\\
 & -\dfrac{1}{2}g(V,W)\displaystyle\sum_{i=1}^n g(\dot{\alpha}(0)(e_i),e_i)-\dfrac{n}{2}g(\dot{\alpha}(0)(V),W)\\
 & + \dfrac{1}{2}\sum_{i=1}^n g(e_i,W)g(\dot{\alpha}(0)(V),e_i) +\dfrac{1}{2}\displaystyle\sum_{i=1}^n g(V,e_i)g(\dot{\alpha}(0)(e_i),W)\\
 & = \mathrm{Ric}^g(V,W)-\dfrac{\mathrm{trace}(\dot{\alpha}(0))}{2}g(V,W)-\left(\dfrac{n-2}{2}\right)g(\dot{\alpha}(0)(V),W).
\end{split}
\]
The vanishing properties of the Ricci tensor on $\iota_{g}(\mathcal{Q})$ are direct consequences of  $(\ref{120721A}
)$ and (\ref{220721A}).
\end{proof}

\section{Constructing M\"{o}bius structures from spacelike immersions}\label{main results}

From now on, we assume $(M,c)$ is a Riemannian conformal structure on an $(n\geq 2)$-dimensional manifold $M$ and we have fixed 
\begin{enumerate}
\item a metric $g\in c$ and   
\item an admissible  smooth $1$-parameter family $\alpha\colon \R \to  \mathcal{T}_{(1,1)}M$. 
\end{enumerate}
Thus, we have the pre-ambient space $(\widetilde{M}, \widetilde{g})$ as  in Proposition \ref{280321D}.

\smallskip

For every $u\in C^{\infty}(M)$,  the spacelike immersion $\Psi^{e^{2u}g}$ in Lemma \ref{endomorfismo} satisfies
\begin{equation}\label{immer}
\Psi^{e^{2u}g}: M\to (\widetilde{M}, \widetilde{g}), \quad x\mapsto (e^{u(x)},0, x)
\end{equation}
and $(\Psi^{e^{2u}g})^{*}(\widetilde{g})= e^{2u}g.$
For simplicity of notation,  from now on,  we write $\Psi^{u}$ instead of $\Psi^{e^{2u}g}$.
The differential map of $\Psi^{u}$ is 
 \begin{equation}\label{220221C}
 T\Psi^{u}\cdot V=V(u)e^{u}\partial_{t} |_{\Psi^{u}}+V |_{\Psi^{u}},
 \end{equation}
 where $V\in \mathfrak{X}(M).$
A direct computation from (\ref{220221C}) shows that the vector fields 
\begin{equation}\label{normal vectors}
\xi^u=e^{u}\partial_{t} |_{\Psi^{u}} \quad  \mathrm{and} \quad  \eta^u= e^{-u}\frac{\| \nabla^g u \|^{2}_{g}}{2}\partial_{t} |_{\Psi^{u}}- e^{-2u}\partial_ {\rho} |_{\Psi^{u}}+e^{-2u} \nabla^{g}u |_{\Psi^{u}}
\end{equation}
span the normal bundle of $\Psi^{u}$ and one easy checks that $\{\xi^{u}, \eta^{u}\}$ is a global  lightlike normal frame. 
The lightlike normal vector field $\xi^{u}$ agrees with $Z|_{\Psi^{u}}$ where $Z\in \mathfrak{X}(\widetilde{M})$ is the fundamental vector field  corresponding to the action $\widetilde{\varphi}$.


\begin{lemma}\label{270221A}
Let $\Psi^{u}: M\to (\widetilde{M}, \widetilde{g})$ be the immersion given in $(\ref{immer})$. For every $V \in \mathfrak{L}(M)\subset  \overline{\mathfrak{X}}(M)$, the following formulas hold
$$
(V |_{\Psi^{u}})^{\top}=T\Psi^{u} \cdot V, \quad  (\partial_{\rho}|_{\Psi^{u}})^{\top}=T\Psi^{u}\cdot \nabla^{g}u.
$$
\end{lemma}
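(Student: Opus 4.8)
The statement asks to decompose two vector fields along the immersion $\Psi^u$ into their tangent and normal parts. The key input is the differential formula (\ref{220221C}), which tells me that $T\Psi^u \cdot V = V(u)e^u \partial_t|_{\Psi^u} + V|_{\Psi^u}$, together with the explicit lightlike normal frame $\{\xi^u, \eta^u\}$ in (\ref{normal vectors}). My plan is to verify each formula by directly exhibiting the candidate decomposition and checking that the difference between the given vector field and the proposed tangent part is genuinely normal, i.e.\ $\widetilde g$-orthogonal to the image of $T\Psi^u$. Since the tangent space of $\Psi^u$ is spanned by $\{T\Psi^u \cdot W : W \in \mathfrak{X}(M)\}$ and the normal bundle by $\{\xi^u, \eta^u\}$, it suffices to test orthogonality against $\xi^u$ and $\eta^u$.

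For the first formula, rearranging (\ref{220221C}) gives $V|_{\Psi^u} = T\Psi^u \cdot V - V(u)e^u\partial_t|_{\Psi^u}$. Since $\xi^u = e^u \partial_t|_{\Psi^u}$, this reads $V|_{\Psi^u} = T\Psi^u\cdot V - V(u)\,\xi^u$, so the tangent part is exactly $T\Psi^u \cdot V$ and the normal part is $-V(u)\,\xi^u$. This is essentially immediate once I note that $\xi^u$ is normal, so $(V|_{\Psi^u})^\top = T\Psi^u\cdot V$ follows with no real computation.

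For the second formula I would compute $\widetilde g(\partial_\rho|_{\Psi^u}, \xi^u)$ and $\widetilde g(\partial_\rho|_{\Psi^u}, \eta^u)$ using the explicit metric (\ref{220221A}) restricted to $\rho = 0$. From the form of $\widetilde g$ one reads off the pairings of $\partial_\rho$ with $\partial_t$, with $\partial_\rho$, and with the lifts $W \in \mathfrak{L}(M)$; in particular $\widetilde g(\partial_\rho,\partial_t) = t$ at points of $\iota_g(\mathcal{Q})$ while $\widetilde g(\partial_\rho, W) = 0$ there. These inner products let me project $\partial_\rho|_{\Psi^u}$ onto the tangent space. The cleanest route is to propose $(\partial_\rho|_{\Psi^u})^\top = T\Psi^u \cdot \nabla^g u$ and verify that $\partial_\rho|_{\Psi^u} - T\Psi^u\cdot \nabla^g u$ pairs to zero against every $T\Psi^u \cdot W$; equivalently, I check that this difference is a combination of $\xi^u$ and $\eta^u$ by matching it against the explicit expression (\ref{normal vectors}) for $\eta^u$, whose $\partial_\rho$-component is exactly $-e^{-2u}$.

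The main obstacle is purely bookkeeping: correctly evaluating $\widetilde g$ on the mixed lift/coordinate vector fields at $\rho = 0$ and tracking the factors $e^u$, $t = e^u$, and $\|\nabla^g u\|_g^2$ so that the tangential projection collapses precisely to $T\Psi^u \cdot \nabla^g u$. Since $t = e^{u(x)}$ along $\Psi^u$, I must substitute this consistently. No deep idea is needed; the only care required is in confirming that the normal part of $\partial_\rho|_{\Psi^u}$ matches the $\xi^u, \eta^u$ spanned by (\ref{normal vectors}), which is what certifies that the remaining piece $T\Psi^u \cdot \nabla^g u$ is the full tangent part.
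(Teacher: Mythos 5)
Your proposal is correct and follows essentially the same route as the paper: both arguments rest on formula (\ref{220221C}) and the explicit lightlike normal frame (\ref{normal vectors}), identifying the tangent part by splitting each vector field into a $T\Psi^{u}$-image piece plus a piece in the span of $\{\xi^{u},\eta^{u}\}$. Your handling of the first identity (rearranging (\ref{220221C}) so the normal remainder is $-V(u)\,\xi^{u}$) is a slightly cleaner version of the paper's projection computation, and the second identity closes exactly as you predict, the normal part being $-\tfrac{1}{2}\|\nabla^{g}u\|_{g}^{2}\,\xi^{u}-e^{2u}\eta^{u}$.
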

 
\begin{proof}
From (\ref{220221C}) and (\ref{normal vectors}),  it is easy to check that
\[
\begin{split}
(V |_{\Psi^{u}})^{\top}& =V |_{\Psi^{u}}+ \widetilde{g}\Big(V |_{\Psi^{u}}, \xi^{u}\Big)\eta^{u}+ \widetilde{g}\Big(V |_{\Psi^{u}}, \eta^{u}\Big)\xi^{u}\\
& = V |_{\Psi^{u}}+ V(u)e^{u}\partial_{t} |_{\Psi^{u}}=T\Psi^{u}\cdot V.
\end{split}
\]
The same proof works for  $(\partial_{\rho}|_{\Psi^{u}})^{\top}$.
\end{proof}

\begin{proposition}\label{lema}
	Let $A_{\xi^u},A_{\eta^u}$ be the Weingarten endomorphisms associated to the lightlike normal vector fields $\xi^u,\eta^u$ given in $(\ref{normal vectors})$, then
	$A_{\xi^u}= -\mathrm{Id}$
	and
	\begin{equation}\label{A2}
	A_{\eta^u}=e^{-2u}\left[\frac{\dot{\alpha}(0)-\|\nabla^g u\|_g^2 \, \mathrm{Id}}{2}+g(\nabla^g u,\mathrm{Id})\nabla^g u\ -\nabla^{g}\nabla^{g} \,u\right],
	\end{equation}
	where $\nabla^{g}\nabla^{g} \,u (V):=\nabla^g_V\nabla^g u$ for all $V\in\mathfrak{X}(M)$.
\end{proposition}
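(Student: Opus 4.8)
The plan is to treat the two Weingarten endomorphisms separately. The identity $A_{\xi^u}=-\mathrm{Id}$ comes essentially for free: by the remark preceding the statement we have $\xi^u=e^u\partial_t|_{\Psi^u}=Z|_{\Psi^u}$, and $\Psi^u=\Psi^{e^{2u}g}$ is precisely the immersion $\iota\circ(e^{2u}g)$ of Lemma \ref{endomorfismo} for the representative $e^{2u}g\in c$. Hence $A_{\xi^u}=-\mathrm{Id}$ is an immediate instance of that lemma (the conclusion is metric independent, so no extra care about the induced metric $e^{2u}g$ is needed).

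For $A_{\eta^u}$ I would use the Weingarten formula $\widetilde\nabla_V\eta^u=-T\Psi^u(A_{\eta^u}V)+\nabla^\perp_V\eta^u$, so that it suffices to compute the covariant derivative $\widetilde\nabla_{T\Psi^u\cdot V}\eta^u$ along the image (which lies in $\{\rho=0\}$) and to isolate its tangential part. Writing $T\Psi^u\cdot V=V(u)e^u\partial_t+V$ from (\ref{220221C}) and $\eta^u=a\,\partial_t+b\,\partial_\rho+e^{-2u}\nabla^g u$ with $a=e^{-u}\|\nabla^g u\|_g^2/2$ and $b=-e^{-2u}$, I would expand $\widetilde\nabla_{T\Psi^u\cdot V}\eta^u$ by the Leibniz rule and substitute every connection term from Proposition \ref{220221B} restricted to $\rho=0$, where $\alpha(0)=\mathrm{Id}$, $t=e^u$ on the image, and $\widetilde g(X,Y)=t^2g(X,Y)$ on lifts. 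The structure of the answer is transparent in this expansion: the $\dot\alpha(0)$ term arises from the $\partial_\rho$-piece of $\eta^u$ through $\widetilde\nabla_V\partial_\rho=\tfrac12\dot\alpha(0)(V)$, the $\|\nabla^g u\|_g^2\,\mathrm{Id}$ term from the $\partial_t$-piece through $\widetilde\nabla_V\partial_t=\tfrac1t V$, and differentiating the gradient piece $e^{-2u}\nabla^g u$ produces both the Hessian $\nabla^g\nabla^g u$ and a multiple of $\nabla^g u$ that assembles into $g(\nabla^g u,-)\nabla^g u$.

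The key simplification I would exploit is that, after collecting terms, the $\partial_\rho$ components cancel identically and the surviving $\partial_t$ component is normal (a multiple of $\xi^u=e^u\partial_t$), so it disappears upon tangential projection. What remains is a sum of lifts of vector fields on $M$, and here Lemma \ref{270221A} finishes the argument: each lift $X\in\mathfrak{L}(M)$ satisfies $(X|_{\Psi^u})^\top=T\Psi^u\cdot X$, so the tangential projection is just $T\Psi^u$ applied to a bracketed endomorphism acting on $V$. Matching this against $-T\Psi^u(A_{\eta^u}V)$ and rewriting $V(u)=g(\nabla^g u,V)$ yields (\ref{A2}). I expect the only real obstacle to be purely computational: keeping the factors of $t=e^u$ and the conversions $\widetilde g=t^2 g$ straight through the Leibniz expansion, and carefully separating the normal $\partial_t,\partial_\rho$ directions from the tangential lifts before projecting.
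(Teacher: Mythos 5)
Your proposal is correct and is essentially the paper's own proof: $A_{\xi^u}=-\mathrm{Id}$ is quoted from Lemma \ref{endomorfismo}, and $A_{\eta^u}$ is obtained exactly as you describe, by Leibniz-expanding the derivative of the three pieces of $\eta^u$ using Proposition \ref{220221B} and then projecting tangentially via $(\partial_t|_{\Psi^u})^{\top}=0$, Lemma \ref{270221A} and formula (\ref{150721A}). The only cosmetic difference is how the $\partial_\rho$-terms are disposed of: you note they cancel identically when differentiating along $T\Psi^u\cdot V$, whereas the paper groups them with the gradient terms and invokes $\bigl(\partial_{\rho}|_{\Psi^{u}}-\nabla^{g}u|_{\Psi^{u}}\bigr)^{\top}=0$ from Lemma \ref{270221A}; the two observations are equivalent, since the discrepancy lies in the normal bundle.
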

\begin{proof}
	The first assertion is a direct consequence of Lemma \ref{endomorfismo}.  On the other hand, according again to (\ref{220221C}) and  Proposition \ref{220221B},  we have for $V \in \mathfrak{L}(M),$
	\begin{equation}\label{230221A}
	\widetilde{\nabla}_{V}\Big(e^{-u}\frac{\| \nabla^g u \|^{2}_{g}}{2}\partial_{t}\Big)=V\Big(e^{-u}\frac{\| \nabla^g u \|^{2}_{g}}{2}\Big)\partial_{t} |_{\Psi^{u}}+ e^{-2u}\frac{\| \nabla^g u \|^{2}_{g}}{2}\,V |_{\Psi^{u}} ,
	\end{equation}
\begin{equation}\label{230221B}
	\widetilde{\nabla}_V (e^{-2u}\partial_ {\rho})=- 2e^{-2u}V(u)\,\partial_ {\rho} |_{\Psi^{u}}+\frac{e^{-2u}}{2}\Big(\dot{\alpha}(0)(V)\Big)|_{\Psi^{u}}
	\end{equation}
	and 
	\begin{equation}\label{230221C}
	\widetilde{\nabla}_V (e^{-2u}\nabla^{g}u)=- 2e^{-2u}V(u)\,\nabla^{g}u |_{\Psi^{u}}+e^{-2u}\Big(\widetilde{\nabla}_{V}\nabla^{g}u\Big)|_{\Psi^{u}}.
	\end{equation}
Taking into account that $(\partial_{t} |_{\Psi^{u}})^{\top}=0$, from Lemma \ref{270221A},  we also get
$$
\Big(\partial_{\rho} |_{\Psi^{u}}-\nabla^{g}u |_{\Psi^{u}}\Big)^{\top}=0.
$$
Then, from (\ref{230221A}), (\ref{230221B}) and (\ref{230221C}), we arrive to
$$
\left(\widetilde{\nabla}_V\,\eta^u\right)^{\top}=e^{-2u}\left[\dfrac{\|\nabla^{g} u\|_{g}^{2}}{2}(V |_{\Psi^{u}})^{\top}-\frac{1}{2}\Big(\Big(\dot{\alpha}(0)\left(V\right) \Big) |_{\Psi^{u}}\Big)^{\top}+\left(\left(\widetilde{\nabla}_V\nabla^{g} u\right) |_{\Psi^{u}}\right)^{\top}\right].
$$
Now, the proof ends by means of a straightforward computation from (\ref{150721A}) and Lemma \ref{270221A}.
\end{proof}

\begin{corollary}\label{240321A}
Let $\Psi^{u}: M\to (\widetilde{M}, \widetilde{g})$ be the immersion given in $(\ref{immer})$.  The normal vector fields $\xi^{u}$ and $\eta^{u}$ are parallel with respect to the  normal connection.  In particular, the normal curvature tensor vanishes,  that is,  $R^{\perp}(V,W)=0$ for every $V, W \in \mathfrak{X}(M).$
\end{corollary}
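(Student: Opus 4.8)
The plan is to prove the stronger statement that both normal fields $\xi^u$ and $\eta^u$ are $\nabla^{\perp}$-parallel, after which the vanishing of $R^{\perp}$ is immediate because $\{\xi^u,\eta^u\}$ spans the rank-two normal bundle.

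First I would record the identity $\widetilde{\nabla}_X Z = X$ for every $X\in\mathfrak{X}(\widetilde{M})$. Writing $\omega$ for the one-form metrically equivalent to $Z$, the map $X\mapsto\widetilde{\nabla}_X\omega$ has symmetric part equal to $\widetilde{g}$ by the homothety condition $\mathcal{L}_Z\widetilde{g}=2\widetilde{g}$, and is itself symmetric because $d\omega=0$ by Remark \ref{dw}; these two facts together force $\widetilde{\nabla}_X\omega=\widetilde{g}(X,-)$, that is $\widetilde{\nabla}_X Z=X$. (This also follows by a direct computation from Proposition \ref{220221B} using $Z=t\partial_{t}$.) Since $\xi^u=Z|_{\Psi^u}$ by the Remark following (\ref{normal vectors}), this yields $\widetilde{\nabla}_V\xi^u=\widetilde{\nabla}_{T\Psi^u\cdot V}Z=T\Psi^u\cdot V$ for all $V\in\mathfrak{X}(M)$, which is tangent; hence $\nabla^{\perp}_V\xi^u=(\widetilde{\nabla}_V\xi^u)^{\perp}=0$ and $\xi^u$ is parallel (recovering $A_{\xi^u}=-\mathrm{Id}$ from its tangent part, in agreement with Lemma \ref{endomorfismo}).

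For $\eta^u$ I would exploit that $\nabla^{\perp}$ is a metric connection on the normal bundle, together with the lightlike frame relations $\widetilde{g}(\xi^u,\xi^u)=\widetilde{g}(\eta^u,\eta^u)=0$ and $\widetilde{g}(\xi^u,\eta^u)=-1$. Expanding $\nabla^{\perp}_V\eta^u=c\,\xi^u+d\,\eta^u$ in the frame, differentiating $\widetilde{g}(\eta^u,\eta^u)=0$ gives $\widetilde{g}(\nabla^{\perp}_V\eta^u,\eta^u)=-c=0$, while differentiating $\widetilde{g}(\xi^u,\eta^u)=-1$ and using the already established $\nabla^{\perp}_V\xi^u=0$ gives $\widetilde{g}(\xi^u,\nabla^{\perp}_V\eta^u)=-d=0$. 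Hence $\nabla^{\perp}_V\eta^u=0$, so $\eta^u$ is parallel as well.

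Finally, since $\xi^u$ and $\eta^u$ are both parallel and form a global frame of the normal bundle, the tensorial expression $R^{\perp}(V,W)\zeta=\nabla^{\perp}_V\nabla^{\perp}_W\zeta-\nabla^{\perp}_W\nabla^{\perp}_V\zeta-\nabla^{\perp}_{[V,W]}\zeta$ vanishes for $\zeta\in\{\xi^u,\eta^u\}$ and therefore for every normal $\zeta$; thus $R^{\perp}(V,W)=0$ for all $V,W\in\mathfrak{X}(M)$. The only mildly delicate step is the identity $\widetilde{\nabla}_X Z=X$, which is where the closedness of $\omega$ is essential; I do not expect a genuine obstacle beyond this, as the remainder reduces to metric compatibility of $\nabla^{\perp}$ and the two defining relations of the lightlike frame.
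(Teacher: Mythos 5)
Your proposal is correct, and its overall skeleton matches the paper's proof: establish $\nabla^{\perp}\xi^{u}=0$, then use metric compatibility of $\nabla^{\perp}$ together with the lightlike frame relations $\widetilde{g}(\xi^u,\xi^u)=\widetilde{g}(\eta^u,\eta^u)=0$, $\widetilde{g}(\xi^u,\eta^u)=-1$ to force $\nabla^{\perp}_V\eta^{u}=0$, and conclude $R^{\perp}=0$ from parallelism of a global normal frame. Your treatment of $\eta^u$ and of the final step is essentially identical to the paper's (the paper writes $\nabla^{\perp}_V\eta^u=f\,\xi^u$ after killing the $\eta^u$-component, then shows $f=0$; you expand in the frame and kill both coefficients, which is the same computation). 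Where you genuinely differ is the step $\nabla^{\perp}_V\xi^{u}=0$: the paper compares the Weingarten formula (with $A_{\xi^u}=-\mathrm{Id}$ from Proposition \ref{lema}) against a direct computation of $\widetilde{\nabla}_V(e^{u}\partial_t)$ using (\ref{240221A}), whereas you first prove the global identity $\widetilde{\nabla}_X Z=X$ on all of $\widetilde{M}$, combining the homothety condition $\mathcal{L}_Z\widetilde{g}=2\widetilde{g}$ (symmetric part of $\widetilde{\nabla}\omega$ equals $\widetilde{g}$) with $d\omega=0$ from Remark \ref{dw} (antisymmetric part vanishes), and then restrict along $\Psi^u$. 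Your route is more invariant and slightly stronger: it recovers $A_{\xi^u}=-\mathrm{Id}$ and $\nabla^{\perp}\xi^u=0$ simultaneously, and it shows that parallelism of $\xi^u$ holds for \emph{any} ambient metric whose homothety one-form is closed, not just the explicit family (\ref{220221A}); the price is that it invokes a small amount of extra structure ($d\omega=0$ globally, which Remark \ref{dw} does provide), while the paper's argument reuses only machinery already computed for this specific metric. Both are complete; your coefficient checks ($-c=0$ and $-d=0$) are consistent with the signs in the frame relations.
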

\begin{proof}
From Proposition \ref{lema}, we know that $A_{\xi^{u}}=-\mathrm{Id}$. 
Then, the Weingarten formula reads as follows
$$
\widetilde{\nabla}_V\xi^{u}=T \Psi^{u}\cdot V+\nabla^{\perp}_V\,\xi^{u}=V(u)e^{u}\partial_{t} |_{\Psi^{u}}+V |_{\Psi^{u}}+\nabla^{\perp}_V\,\xi^{u}.
$$
On the other hand,  from (\ref{240221A}),  we get 
$$\widetilde{\nabla}_V\xi^{u}=\widetilde{\nabla}_V\, (e^{u}\partial_{t} )=V(u)e^{u}\partial_{t} |_{\Psi^{u}}+e^{u}e^{-u}\,V |_{\Psi^{u}}=V(u)e^{u}\partial_{t} |_{\Psi^{u}}+V |_{\Psi^{u}},$$
and therefore $\nabla^{\perp}_V\,\xi^{u}=0$. 
Now, taking into account that $\{\xi^{u}, \eta^{u}\}$ is a global  lightlike normal frame,  we have
$V \widetilde{g}(\xi^{u}, \eta^{u})=\widetilde{g}(\xi^{u}, \nabla^{\perp}_{V}\eta^{u})=0$  for every $V\in \mathfrak{X}(M)$.  Thus, since $\Psi^{u}$ is a codimension two spacelike submanifold, there is a smooth function $f\in C^{\infty}(M)$ such that $\nabla^{\perp}_{V}\eta^{u}=f\, \xi^{u}$ and then $0=\widetilde{g}(\eta^{u}, \nabla^{\perp}_{V}\eta^{u})=-f$ and so $\nabla^{\perp}_V\,\eta^{u}=0$.
\end{proof}

\begin{remark}\label{100321A}
{\rm From Proposition \ref{lema} and formula (\ref{270221B}),  one obtains the second fundamental form $\mathrm{II}^{u}$ of $\Psi^{u}$ as follows
$$
\mathrm{II}^{u}(V,W)=-g\Big(\frac{\dot{\alpha}(0)(V)-\|\nabla^g u\|_g^2 \, V}{2}+V(u)\nabla^g u\ -\nabla^{g}_{V}\nabla^{g} \,u,W\Big)\xi^{u}+e^{2u}g(V,W)\eta^{u},
$$
for every $V, W \in \mathfrak{X}(M).$ 
In particular,  the corresponding mean curvature vector field is
\begin{equation}\label{020321A}
\mathbf{H}^{u}=\frac{e^{-2u}}{n}\Big( \triangle^{g} u-\frac{\mathrm{trace}(\dot{\alpha}(0))-(n-2) \|\nabla^g u\|_g^2}{2} \Big)\xi^{u}+\eta^{u},
\end{equation}
where $\triangle^{g}$ denotes the Laplace operator of the metric $g$. 
}
\end{remark}

Now, we are in position to state the main result of this paper.
Assume $(M,c)$ is a Riemannian conformal structure on an $(n\geq 2)$-dimensional manifold $M$ and $\alpha\colon \R \to  \mathcal{T}_{(1,1)}M$ is an admissible  smooth $1$-parameter family. By means of Proposition \ref{lema}  we have $
A_{\eta^0}=\frac{\dot{\alpha}(0)}{2} 
$  and then for every $u\in \mathcal{C}^{\infty}(M)$,  
$$
A_{\eta^{u}}=e^{-2u}\left[A_{\eta^{0}}-\frac{1}{2}\|\nabla^g u\|_g^2 \, \mathrm{Id}+g(\nabla^g u,\mathrm{Id})\nabla^g u\ -\nabla^{g}\nabla^{g} \,u\right].
$$
Hence, for every $V,W\in \mathfrak{X}(M)$ we get
$$
e^{2u}g\left(A_{\eta^u}(V),W \right)=g(A_{\eta^{0}}(V), W)- \frac{\| \nabla^g u \|^{2}_{g}}{2}g- \mathrm{Hess}^{g}(u)+ du \otimes du.
$$
In other words, the assignment
		\begin{equation}\label{120123A}
		D:c\to \mathcal{T}_{(0,2)}M, \quad e^{2u}g\mapsto e^{2u}g\left(A_{\eta^u}(-),- \right),
		\end{equation}
satisfies the conformal transformation law $(2)$ in  \ref{280221C}. In addition, if we assume $\mathrm{trace}_{g}(A_{\eta^{0}})=\frac{\mathrm{scal}^{g}}{2(n-1)}$, the map $D$ defines a   M\"{o}bius structure for the Riemannian conformal structure $(M, c)$.	
Therefore, we have obtained the following result.

\begin{theorem}\label{040222}
Let $(M,c)$ is a Riemannian conformal structure on an $(n\geq 2)$-dimensional manifold $M$. Assume the admissible  smooth $1$-parameter family $\alpha\colon \R \to  \mathcal{T}_{(1,1)}M$ satisfies $\mathrm{trace}(\dot{\alpha}(0))=\frac{\mathrm{scal}^{g}}{n-1}$.
Then, the assignment $D$ in (\ref{120123A})
		defines
 a  M\"{o}bius structure for the Riemannian conformal structure $(M, c)$. 
\end{theorem}

Conversely,  every M\"{o}bius structure  $(M, c, D)$ can be constructed (at least locally) from the above Theorem.
In fact, fix $g\in c$ and  consider $$\alpha(\rho)= \mathrm{Id}+ 2 \rho \,\widehat{D}(g),$$  where
$D(g)(V, W)=g(\widehat{D}(g)(V),W)$ for $V, W \in \mathfrak{X}(M)$. For any $x\in M$, there is  an open subset $x\in \mathcal{O}\subset M$ such that $\alpha$ is an admissible  smooth $1$-parameter family on $\mathcal{T}_{(1,1)}\mathcal{O}$. It is easily checked
 $(\mathcal{O},c, D)$ is obtained from  $\alpha$ by means of Theorem \ref{040222}.
Note  that  $\alpha(\rho)= \mathrm{Id}+ 2 \rho \,\widehat{D}(g)$ can be replaced for any curve with $\alpha(0)=\mathrm{Id}$ and $\dot{\alpha}(0)=2\,\widehat{D}(g).$

\begin{remark}
{\rm When $M$ is compact,  every M\"{o}bius structure  $(M, c, D)$ is globally recovered from suitable Weingarten endomorphisms
as in Theorem \ref{040222}.
}

\end{remark}

\begin{corollary}\label{080321A}
Let $(M,g)$ be a Riemannian manifold with $\mathrm{dim}\,M\geq 3$. Then
the Schouten tensor  $P^g$ is given by $P^g=g(A(-), -)$  (at least locally) where $A$ is 
the Weingarten endomorphism of a suitable isometric codimension two immersion of $(M,g)$ in a Lorentzian manifold $(\widetilde{M}, \widetilde{g}).$
\end{corollary}

%


\begin{remark}
{\rm This result could be compared with the classical Brinkmann result \cite{Bri} in the 1920s which stated that an $(n\geq 3)$-dimensional  simply connected Riemannian manifold is (locally) conformally flat if and only if it can be isometrically immersed in the future lightlike cone $\mathcal{N}^{n+1}\subset \L^{n+2}$. This classical result is presented in a modern form in \cite{AD}.
}
\end{remark}

\begin{remark}
{\rm Since, there is no preferred  M\"{o}bius structure on a $2$-dimensional Riemannian conformal structure, Theorem \ref{040222} provides an explicit method to construct such structures. Moreover, by means of  Corollary $\ref{ricci}$, the condition $\mathrm{trace}(\dot{\alpha}(0))=2 K^g$, where $K^g$ is the Gauss curvature of fixed metric $g$ implies that the Ricci tensor of $\widetilde{g}$ satisfies 
$
\widetilde{\mathrm{Ric}}|_{\iota_{g}(\mathcal{Q})}(T\iota_{g}\cdot \xi,T\iota_{g}\cdot \eta)= 0
$ 
for all $\xi,\eta\in\mathfrak{X}(\mathcal{Q})$.}
\end{remark}

\begin{remark}\label{k=H}
{\rm Under the assumption $\mathrm{trace}(\dot{\alpha}(0))=\frac{\mathrm{scal}^g}{n-1}$ and  by means of the  relationship between the scalar curvature of conformally related metrics, formula (\ref{020321A}) reduces to
$$
\mathbf{H}^{u}=-\dfrac{1}{2n(n-1)}\mathrm{scal}^{e^{2u}g}\,\xi^u+\eta^u,
$$
and therefore, $\|\mathbf{H}^u\|^2=\frac{\mathrm{scal}^{e^{2u}g}}{n(n-1)}.$ This formula widely generalizes \cite[Cor. 4.5]{PPR} and \cite[Cor. 3.7]{PR13}. Therefore, the causality of $\mathbf{H}^u$ is determined by the sign of $\mathrm{scal}^{e^{2u}g}$. For conformal Riemannian structures on compact $2$-dimensional manifolds $(M, c)$ and, as direct consequence of the Gauss-Bonnet theorem, we get
$$
\int_{M}e^{2u} \|\mathbf{H}^u\|^2 \, d\mu_{g}= 2\pi \chi(M),
$$
where $\chi(M)$ is the Euler characteristic of the manifold $M$ and $d\mu_{g}$ is the canonical measure associated to $g$.
Also, from Corollary \ref{240321A}, the condition $\nabla^{\perp}\mathbf{H}^{u}=0$ is equivalent to $\mathrm{scal}^{e^{2u}g}$ being constant (compare with \cite[Cor. 3.10]{PR13}). The positive solution to the Yamabe problem states that on every conformal Riemannian structure $(M,c)$ on a compact manifold $M$ there is a metric $g\in c$ with constant scalar curvature. Therefore, in the compact case, there exists an immersion $\Psi^{u}$ as in  (\ref{immer}) with parallel mean curvature vector field.
}
\end{remark}

\section{An Application}\label{app}

\noindent For a  M\"{o}bius structure $(M, c, D)$ on a $2$-dimensional manifold $M$, the Cotton-York tensor for $g\in c$ has been introduced in \cite{calderbank} and \cite{burden2015numerical2}
 as follows
\begin{equation}\label{150721B}
C(g)(U,V,W)=g\left(\left(\nabla^{g}_{U}\widehat{D}(g)\right)(V)-\left(\nabla^{g}_{V}\widehat{D}(g)\right)(U),W\right), \quad U, V, W\in \mathfrak{X}(M).
\end{equation}
This definition formally agrees with the usual Cotton-York tensor defined from the Schouten tensor of an $(n\geq 3)$-dimensional  Riemannian manifold  $(M,g)$.
The Cotton-York tensor given in (\ref{150721B}) for $n=2$ satisfies $C(g)=C(e^{2u}g)$  (e.g., \cite{burden2015numerical2}).

In this Section, we assume  $(M, c, D)$ is a  M\"obius structure on a $2$-dimensional manifold $M$ which is achieved by means of Theorem \ref{040222}.
\begin{lemma}\label{160721C}
Let  $(M, c, D)$ be a  M\"obius structure on a $2$-dimensional manifold $M$. Then,  the Cotton-York tensor  satisfies
$$
C(g)(V,U, W)\xi^{u}=(\nabla_U \mathrm{II}^{u})(V,W)-(\nabla_V \mathrm{II}^{u} )(U,W),
$$
for 
$
\Psi^{u}$
as in $(\ref{immer})$. Hence, the Codazzi equation $(\ref{160721B})$ reduces to
$$
\left(\widetilde{R}(T\Psi^{u}\cdot U,T\Psi^{u}\cdot V)T\Psi^{u}\cdot W\right)^{\perp}= C(g)(V,U, W)\xi^{u}.
$$
\end{lemma}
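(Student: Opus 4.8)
The plan is to write the second fundamental form $\mathrm{II}^u$ of $\Psi^u$ in a form adapted to the parallel lightlike frame and then differentiate it, so that the Cotton-York tensor emerges from the antisymmetrization appearing in the Codazzi identity.

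First I would record that, since $A_{\xi^u}=-\mathrm{Id}$ and $P(e^{2u}g)(V,W)=e^{2u}g(A_{\eta^u}(V),W)$, formula (\ref{270221B}) rewrites Remark \ref{100321A} as
$$
\mathrm{II}^u(V,W)=-P(e^{2u}g)(V,W)\,\xi^u+e^{2u}g(V,W)\,\eta^u .
$$
Thus $\mathrm{II}^u$ is built from the induced metric $e^{2u}g$ and the M\"obius tensor $P(e^{2u}g)$, with the two coefficient fields expanded in the frame $\{\xi^u,\eta^u\}$.

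Next I would apply the definition (\ref{160721A}), keeping in mind that the connection $\nabla$ occurring there is the Levi-Civita connection $\nabla^{e^{2u}g}$ of the induced metric. By Corollary \ref{240321A} the frame $\{\xi^u,\eta^u\}$ is parallel for $\nabla^\perp$, so $\nabla^\perp_U$ acts only on the scalar coefficients above. Hence the $\eta^u$-component of $(\nabla_U\mathrm{II}^u)(V,W)$ equals $(\nabla^{e^{2u}g}_U(e^{2u}g))(V,W)=0$, because $e^{2u}g$ is parallel for its own Levi-Civita connection, while the $\xi^u$-component equals $-(\nabla^{e^{2u}g}_U P(e^{2u}g))(V,W)$. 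Antisymmetrizing in $U,V$ and comparing with (\ref{150721B})---using $g((\nabla^g_U\widehat P(g))(V),W)=(\nabla^g_U P(g))(V,W)$ together with the skew-symmetry of $C$ in its first two arguments---I obtain
$$
(\nabla_U \mathrm{II}^{u})(V,W)-(\nabla_V \mathrm{II}^{u})(U,W)=C(e^{2u}g)(V,U,W)\,\xi^{u}.
$$

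The decisive step is then the conformal invariance $C(e^{2u}g)=C(g)$, valid for $n=2$ and recalled just before the statement; it turns the right-hand side into $C(g)(V,U,W)\,\xi^u$, proving the first formula. The second assertion is immediate: substituting the first formula into the Codazzi equation (\ref{160721B}) yields the stated expression for the normal part of $\widetilde R$. The only point requiring care is that the covariant derivative of $\mathrm{II}^u$ is governed by the connection of the \emph{induced} metric $e^{2u}g$, so the tensor that naturally appears is $C(e^{2u}g)$; dimension two is essential precisely because it is only there that the Cotton-York tensor is conformally invariant, allowing the return to the fixed background $g$.
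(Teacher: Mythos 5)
Your proposal is correct and takes essentially the same route as the paper's proof: write $\mathrm{II}^{u}(V,W)=-P(e^{2u}g)(V,W)\xi^{u}+e^{2u}g(V,W)\eta^{u}$, use Corollary \ref{240321A} so that $\nabla^{\perp}$ differentiates only the scalar coefficients, observe that the $\eta^{u}$-component vanishes because the connection in (\ref{160721A}) is the Levi-Civita connection of the induced metric $e^{2u}g$, antisymmetrize to obtain $C(e^{2u}g)(V,U,W)\xi^{u}$, and conclude via the two-dimensional conformal invariance $C(e^{2u}g)=C(g)$ and substitution into (\ref{160721B}). Your explicit care about which connection governs the derivative of $\mathrm{II}^{u}$ is a point the paper leaves implicit, but the argument is the same.
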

\begin{proof}
According to Remark \ref{100321A},
the second fundamental form of $\Psi^{u}$ is
\begin{equation}\label{300321A}
\mathrm{II}^{u}(V,W)=-P(e^{2u}g)(V,W)\xi^{u}+e^{2u}g(V,W)\eta^{u}. 
\end{equation}
From Corollary \ref{240321A},  we have
$
\nabla^{\perp}_{U}\, \xi^{u}=\nabla^{\perp}_{U}\, \eta^{u}=0$ 
and then, a direct computation gives 
$$
\nabla^{\perp}_{U} (\mathrm{II}^{u}(V,W))=- e^{2u}g\Big(\left(\nabla^{e^{2u}g}_{U}\widehat{P}(e^{2u}g)\right)(V), W\Big)\xi^{u}.
$$
Now, the covariant derivative of the second fundamental form in (\ref{160721A}) is easily computed. The proof ends by means of (\ref{150721B}) and $C(g)=C(e^{2u}g)$ for $n=2$.
\end{proof}

\begin{definition}(\cite{calderbank}, \cite{burden2015numerical2})
	A M\"obius structure $(M, c, D)$ on a $2$-dimensional manifold $M$ is called flat when $C(g)=0$ for every $g\in c$.
	\end{definition}
	
	As a direct consequence of Lemma \ref{160721C}, we have.
	
\begin{proposition}\label{290321D}
A M\"obius structure $(M, c, D)$ on a $2$-dimensional manifold $M$ is flat if and only if for every immersion 
$
\Psi^{u}: M\to (\widetilde{M}, \widetilde{g})
$
as in $(\ref{immer})$,
the curvature tensor $\widetilde{R}$ of the pre-ambient manifold $(\widetilde{M},\widetilde{g})$ satisfies
$$
\widetilde{R}(T\Psi^{u}\cdot U,T\Psi^{u}\cdot V)T\Psi^{u}\cdot W\in\mathfrak{X}(M)\subset \overline{\mathfrak{X}}(M),
$$
 for all $U,V,W\in\mathfrak{X}(M)$.
\end{proposition}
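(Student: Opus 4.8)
The plan is to read off the result directly from the reduced Codazzi equation established in Lemma \ref{160721C}, since the condition in the statement is precisely the vanishing of a normal component that the lemma has already identified with the Cotton-York tensor. First I would recast the hypothesis: a vector field along $\Psi^{u}$ belongs to $\mathfrak{X}(M)\subset\overline{\mathfrak{X}}(M)$, i.e.\ is tangent to the immersion, exactly when its normal part vanishes. Applying this to $\widetilde{R}(T\Psi^{u}\cdot U,T\Psi^{u}\cdot V)T\Psi^{u}\cdot W$, the geometric condition in the statement is equivalent to
$$
\left(\widetilde{R}(T\Psi^{u}\cdot U,T\Psi^{u}\cdot V)T\Psi^{u}\cdot W\right)^{\perp}=0
$$
for all $U,V,W\in\mathfrak{X}(M)$.

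Next I would invoke Lemma \ref{160721C}, which expresses this normal part as $C(g)(V,U,W)\,\xi^{u}$. Because $\{\xi^{u},\eta^{u}\}$ is a lightlike normal frame (see (\ref{normal vectors})), the vector field $\xi^{u}$ is nowhere vanishing; hence $C(g)(V,U,W)\,\xi^{u}=0$ for all $U,V,W$ holds if and only if $C(g)=0$. This already yields both implications for the fixed metric $g$: if the M\"obius structure is flat then $C(g)=0$ and the normal part vanishes for every $u$, while conversely the stated tangency condition for even a single immersion $\Psi^{u}$ forces $C(g)=0$.

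Finally, to match the definition of flatness, which demands $C(g')=0$ for \emph{every} $g'\in c$, I would use the conformal invariance $C(g)=C(e^{2u}g)$ recorded for $n=2$ just before Lemma \ref{160721C}. This shows that vanishing of the Cotton-York tensor for the fixed representative $g$ is equivalent to its vanishing for the whole conformal class, closing the equivalence. I do not expect a genuine obstacle here: the substantive work---rewriting the Codazzi equation in terms of $C(g)$---is carried out in Lemma \ref{160721C}, so the proposition follows as a formal corollary. The only points requiring care are that $\xi^{u}$ is nonzero, so that the scalar factor may be cancelled, and that the conformal invariance of $C$ permits passing from the fixed $g$ to arbitrary representatives of $c$.
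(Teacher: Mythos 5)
Your proposal is correct and follows exactly the paper's route: the paper states Proposition \ref{290321D} as a direct consequence of Lemma \ref{160721C}, and your argument simply spells out the implicit steps---tangency equals vanishing of the normal part, cancellation of the nowhere-zero factor $\xi^{u}$, and the conformal invariance $C(g)=C(e^{2u}g)$ to pass from the fixed representative to the whole class. Nothing is missing.
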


\begin{remark}
{\rm 
For a flat M\"obius structure $(M, c, D)$,  Proposition \ref{290321D} states that tangent spaces of $M$ along $\Psi^{u}$
are invariant under the curvature tensor of $(\widetilde{M}, \widetilde{g}).$
As far as we know, the theory of immersions satisfying this condition appeared for the first time in \cite{Ogiue}.  K. Ogiue called these immersions as invariant immersions.   This condition generalizes properties of the immersions into manifolds of constant sectional curvature.  The existence of curvature invariant tangent subspaces in a general Riemannian manifold is related with the existence of totally geodesic submanifolds (see \cite{Tsukada} for more details).
}
\end{remark}

\end{document}